\documentclass[10pt,a4paper,reqno]{amsart}
\usepackage{epsfig}
\usepackage{color}
\usepackage{amssymb,amsmath,amsthm,amstext,amsfonts}
\usepackage{amssymb,amsmath,amsthm,amstext,amsfonts}
\usepackage{amsmath,amstext,amsthm,amsfonts}
\usepackage{color}
\usepackage[mathscr]{eucal}
\usepackage{dsfont}


\usepackage{psfrag}
\usepackage{url}
\usepackage{epstopdf}
\usepackage{epic,eepic}
\usepackage{upgreek}
\usepackage{amssymb}
\usepackage{mathrsfs}
\usepackage{verbatim}
\usepackage{mathrsfs}
\usepackage{amstext}
\usepackage{amsthm}
\usepackage{amssymb}
\usepackage{graphicx}

\usepackage[colorlinks=true, linkcolor=blue, urlcolor=red, citecolor=blue]{hyperref}

\makeatletter \@addtoreset{equation}{section} \makeatother

\renewcommand\thetable{\thesection.\@arabic\c@table}

\theoremstyle{plain}
\newtheorem{maintheorem}{Theorem}
\newtheorem{theorem}{Theorem }[section]
\newtheorem{proposition}{Proposition}[section]

\newtheorem{lemma}{Lemma}[section]
\newtheorem{maincorollary}{Corollary}

\theoremstyle{definition} \theoremstyle{remark}
\newtheorem{remark}[theorem]{Remark}

\newtheorem{definition}[theorem]{Definition}

\newcommand{\vep}{\varepsilon}

\newcommand{\N}{\mathbb{N}}

\newcommand{\supp}{\operatorname{supp}}

\def\N{\mathbb{N}}

\begin{document}
\title{Some variational principles for the  metric mean dimension of a  semigroup action}

\author[F. Rodrigues]{Fagner B. Rodrigues*}
\thanks{*Corresponding author e-mail: fagnerbernardini@gmail.com}
\address{Departamento de Matem\'atica, Universidade Federal do Rio Grande do Sul, Brazil.}
\email{fagnerbernardini@gmail.com}

\author[T. Jacobus]{Thomas Jacobus}
\address{Departamento de Matem\'atica, Universidade Federal do Rio Grande do Sul, Brazil.}
\email{jacobus.math@gmail.com }

\author[M. Silva]{Marcus V. Silva}
\address{Departamento de Matem\'atica, Universidade Federal do Rio Grande do Sul, Brazil.}
\email{marcus423@gmail.com }
\keywords{On variational principle for the metric mean dimension for free semigroup action}
\subjclass[2010]{Primary: 37A05, 37A35.}

\date{\today}

\begin{abstract}
In this manuscript  we show that the metric mean dimension of a free semigroup action satisfies 
three variational principles: (a) the first one is based on a definition of Shapira's entropy, introduced in \cite{SH} for a singles dynamics
and extended for a semigroup action in this note; (b) the second one treats about a definition of Katok's entropy for a free semigroup action introduced in \cite{CRV-IV}; (c) lastly we consider the local entropy function for a free semigroup action and show that the metric mean dimension satisfies a variational principle in terms of such function. Our results are inspired in the ones obtained by \cite{LT2019}, \cite{VV}, \cite{GS1}  and \cite{RX}.

\end{abstract}

\maketitle

\section{Introduction}
The  aim of this note is to explore the notion of metric mean dimension for a free semigroup action. The notion of metric mean dimension for a dynamical system $f:(X,d)\to (X,d)$, denoted by $\text{mdim}_M(X,\phi,d')$,  was   introduced in \cite{LW2000} and may be related to the problem of whether or not a given dynamical system can be embedded in the shift space $(([0,1]^\mathbb{N})^{\mathbb{Z}}, \sigma)$.
It refines the topological entropy for systems with infinite entropy,
which, in the case of a manifold of dimension greater than one,  form a residual subset of the set consisting of   homeomorphisms   defined on the manifold (see \cite{Yano}). In fact,
 every system with finite topological entropy has metric mean dimension equals to zero.
The metric mean dimension depends on the metric   $d$, therefore it is not a  topological invariant. However, for a metrizable topological space $X$,  $\text{mdim}_M(X,\phi)=\inf _{d'}\text{mdim}_M(X,\phi,d')$ is invariant under topological conjugacy, where the infimum  is taken over all the metrics on $X$ which induce the topology on $X$. By the other hand, as  showed in  \cite{LT2019} and in \cite{VV}, the metric mean dimension is 
strongly related with the ergodic behaviour of the system, since it satisfies a kind of variational principle.

In \cite{CRV-IV} the authors considered the  compact metric space $(Y^\mathbb N,D)$ and  $(X,d)$, where $(Y,d_Y)$ is a compact metric space and $D$ is the product metric induced by $d_Y$. In this setting  they introduced the notion of metric mean dimension for a free semigroup action and proved that for certain classes of random walks; the ones induced by homogeneous probability measures on $Y$, it is possible to obtain a kind of 
Bufetov's formula (see \cite{Buf} for Bufetov's formula for the topological entropy of a free semigroup action). 

Our main goal here is to consider a compactly  generated free semigroup of continuous maps acting on a compact metric and prove that the metric mean dimension satisfies several  variational principles: (a) the first one is based on a definition of Shapira's entropy, introduced in \cite{SH} for a singles dynamics
and extended for a semigroup action in this note; (b) the second one treats about a definition of Katok's entropy for a free semigroup action introduced in \cite{CRV-IV}; (c) lastly we consider the local entropy function for a free semigroup action and show that the metric mean dimension satisfies a variational principle in terms of such function. Our results are inspired in the ones obtained by \cite{LT2019}, \cite{VV}, \cite{GS1}  and \cite{RX}.  As a second objective, we extend the definition of metric mean dimension when we have a compactly generated semigroup and the the topological entropy is the one defined in \cite{GLW}. In this context we  obtain a partial variational principle for the metric mean dimension.

This paper is organized as follows. In Section \ref{section1} we present the main definitions and the main results. In Section \ref{section3} we 
recall some results and definitions about box dimension, homogeneous measures and $G$-homogeneous measures. In Section \ref{section4} we prove the main theorems.

\section{Definitions and Main results}\label{section1}

We start recalling the main concepts we use and describing the systems we will work with.

\subsection{Metric mean dimension of a map}\label{sse.mmd}

Let $(X,d)$ be a compact metric space. Given a continuous map $f \colon X \to X$ and a non-negative integer $n$, define the dynamical metric $d_n \colon X \times X \, \to \,[0,\infty)$ by
$$d_n(x,z)=\max\,\Big\{d(x,z),\,d(f(x),f(z)),\,\dots,\,d(f^{n}(x),f^{n}(z))\Big\}$$
which generates the same topology as $d$. Having fixed $\varepsilon>0$, we say that a set $E \subset X$ is $(n,\varepsilon)$--separated by $f$ if $d_n(x,z) > \varepsilon$ for every $x,z \in E$. In the particular case of $n=1$, we will call such a set $\varepsilon$--separated. Denote by $s(f,n,\varepsilon)$ the maximal cardinality of all $(n,\varepsilon)$--separated subsets of $X$ by $f$. Due to the compactness of $X$, the number $s(f,n,\varepsilon)$ is finite for every $n \in \mathbb{N}$ and $\varepsilon >0$. We say that $R\subset X$ is a $(n,\varepsilon)$--spanning set if for any $x \in X$ there exists $z \in R$ such that $d_n(x,z)<\varepsilon$. When $n=1$, we say that the set is $\varepsilon$--spanning. Let $r(n,\varepsilon)$ be the minimum cardinality of the $(n,\varepsilon)$--spanning subsets of $X$.

\begin{definition}\label{de.metric-mean}
The \emph{lower metric mean dimension} of $f$ with respect to the fixed metric $d$ is given by
$$\underline{\text{mdim}}_M\,\Big(X,f, d\Big) = \liminf_{\varepsilon \,\to \,0^+} \,\frac{h(f,\varepsilon)}{|\log \varepsilon|}$$
where
$$h(f,\varepsilon) = \limsup_{n\, \to\, \infty}\,\frac{1}{n}\,\log s(f,n,\varepsilon).$$
Similarly, the \emph{upper metric mean dimension} of $f$ with respect to $d$ 
is the limit
$$\overline{\text{mdim}}_M\,\Big(X,f,d\Big) = \limsup_{\varepsilon\,\to\, 0^+} \,\frac{h(f,\varepsilon)}{|\log \varepsilon|}.$$
\end{definition}
\noindent Clearly, $\underline{\text{mdim}}_M\,\Big(X,f,d\Big) = \overline{\text{mdim}}_M\,\Big(X,f,d\Big) = 0$ whenever
the topological entropy of $f$, given by $h_{\text{top}}(f) = \lim_{\varepsilon \, \to \, 0^+}\,h(f,\varepsilon)$, is finite.

\subsection{Compactly generated semigroup action of continuous maps}\label{se.semigroup-action}

Let $(X,d)$ and $(Y,d_Y)$ be compact metric spaces and $(g_y)_{y \,\in\, Y}$ be a family of continuous maps $g_y \colon X \to X$. Denote by $G$ the free semigroup having the set $G_1 = \{g_y \colon \,\,y \,\in\, Y\}$ as generator, where the semigroup operation $\circ$ is the composition of maps. Let $\mathbb{S}$ be the induced free semigroup action
$$\begin{array}{rccc}\label{de.semigroup-action}
\mathbb{S} \colon & G \times X & \to & X \\
	& (g,x) & \mapsto & g(x)
\end{array}$$
which is said to be compactly generated by $Y$, and denote by $T_G$ the associated skew product given by
\begin{equation}\label{de.Skew-product}
\begin{array}{rccc}
T_G \colon & Y^{\mathbb N}  \times X & \to & Y^{\mathbb N} \times X \\
	& (\omega,x) & \mapsto & \Big(\sigma(\omega), g_{\omega_1}(x)\Big),
\end{array}
\end{equation}
where $\omega=(\omega_1,\omega_2, \dots)$ is an element of the full unilateral space of sequences $Y^{\mathbb N}$ and $\sigma$ denotes the shift map acting on $Y^{\mathbb N}$. It will be a standing assumption that $T_G$ is a continuous map. If for every $n \in \mathbb{N}$ and $\omega=(\omega_1,\omega_2, \dots) \in Y^{\mathbb N}$ we write
$$f_\omega^n = g_{\omega_n}\, \dots \,g_{\omega_1}$$
then
$$T_G^n(\omega,x) = \Big(\sigma^n(\omega), f_\omega^n(x)\Big).$$

\medskip

Consider the set $G_1^* = G_1 \setminus \{id\}$ and, for each $n \in \mathbb{N}$, let $G_n^*$ denote the space of concatenations of $n$ elements in $G_1^*$. Similarly, define $G=\bigcup_{n\,\in\, \mathbb N_0} G_n$, where $G_0=\{id\}$ and $\underline g \in G_n$ if and only if $\underline g=g_{\omega_n}\, \dots\, g_{\omega_2}\, g_{\omega_1}$, with $g_{\omega_j} \in G_1$ (for notational simplicity's sake we will use $g_j \, g_i$ instead of the composition $g_j\,\circ\, g_i$).
In what follows, we will assume that the generator set $G_1$ is minimal, meaning that no function $g_y \in G_1$, for $y \in Y$, can be expressed as a composition of the remaining generators. To summon an element $\underline{g}$ of $G^*_n$, we will write $|\underline{g}|=n$ instead of $\underline g\in G^*_n$. Each element $\underline g$ of $G_n$ may be seen as a word which originates from the concatenation of $n$ elements in $G_1$. Yet, different concatenations may generate the same element in $G$. Nevertheless, in the computations to be done, we shall consider different concatenations instead of the elements in $G$ they create.

\subsection{Random walks}

A random walk $\mathbb{P}$ on $Y^{\mathbb N}$ is a Borel probability measure in this space of sequences which is invariant by the shift map $\sigma$. For instance, we may consider a finite subset $F=\{p_1, \dots, p_k\}$ of $Y$, a probability vector $(a_1, \cdots,a_k)$ (that is, a selection of positive real numbers $a_i$ such that $\sum_{i=1}^k \, a_i = 1$), the probability measure $\nu=\sum_{i=1}^{k}\,a_i\,\delta_{p_i}$ on $F$ and the Borel product measure $\mathbb{P}_\nu = \nu^{\mathbb N}$ on $Y^{\mathbb N}$. Such a $\mathbb{P}_\nu$ will be called a \emph{Bernoulli measure}, which is said to be \emph{symmetric} if $a_i=\frac{1}{k}$ for every $i \in \{1, \cdots, k\}$, in which case we denote it by $\mathbb{P}_k$. If $Y$ is a Lie group, a natural symmetric random walk is given by $\nu^{\mathbb N}$ where $\nu$ is the Haar measure. We denote by $\mathscr{P}(Y^{\mathbb N})$ the space of Borel probability measures on $Y^{\mathbb N}$ and by $\mathscr{P}_B(Y^{\mathbb N})$ its subset of Bernoulli elements. It will be clear later on that the role of each random walk is to point out a particular complex feature of the dynamics, here defined in terms of either the topological entropy (definition in Subsection~\ref{se.topo-entropy}) or the metric mean dimension (definition in Subsection~\ref{mdm-semigroup}). 

\subsection{Topological entropy of an action $\mathbb{S}$}\label{se.topo-entropy}

Given $\varepsilon>0$ and $\underline g := g_{\omega_{n}} \dots g_{\omega_2} \, g_{\omega_1}\in G_n$, the $n$th-dynamical ball $B_n(x,\underline g,\varepsilon)$ is the set
\begin{align}\label{eq:dynball}
B_n(x,\underline g,\varepsilon)\nonumber
	&:= \Big\{z \in X: d(\underline g_{\,j} (z), \underline g_{\,j} (x) ) \leqslant \varepsilon, \; \forall\, 0\leqslant j \leqslant n \Big\}
\end{align}
where, for every $0 \leqslant j \leqslant n$, the notation $\underline g_{\,j}$ stands for the concatenation $g_{\omega_{j}} \dots g_{\omega_2} \, g_{\omega_1}$ in $G_j$, and $\underline g_0=id$. Observe that this is a classical ball with respect to the dynamical metric $d_{\underline g}$ defined by
\begin{equation}\label{eq:dg}
 d_{\underline g} (x, z):=   \max_{0\,\leqslant\, j \,\leqslant \,n } \, d(\underline{g}_{\,j}(x),\underline{g}_{\,j}(z)).
\end{equation}
Notice also that both the dynamical ball and the dynamical metric depend on the underlying concatenation of generators $g_{\omega_n} \dots g_{\omega_1}$ and not on the semigroup element $\underline g$, since the latter may have distinct representations.

Given $\underline g = g_{\omega_n} \dots g_{\omega_1} \in G_n$, we say that a set $K \subset X$ is \emph{$(\underline g, n, \varepsilon)$--separated} if $d_{\underline g}(x,z) > \varepsilon$ for any two distinct elements $x,z \in K$. The largest cardinality of any $(\underline g, n,\varepsilon)$--separated subset on $X$ is denoted by $s(\underline g, n, \varepsilon)$ (or, equivalently, $s(g_{\omega_n} \dots g_{\omega_1}, n,\varepsilon)$). A set $K \subset X$ is said to be \emph{$(\underline g, n, \varepsilon)$--spanning} if for every $x \in X$ there is $k \in K$ such that $d_{\underline g}(x,k) \leqslant \varepsilon$. The smallest cardinality of any $(\underline g, n,\varepsilon)$--spanning subset on $X$ is denoted by $b(\underline g, n, \varepsilon)$ (or $b(g_{\omega_n} \dots g_{\omega_1}, n,\varepsilon)$).

\medskip

\begin{definition}\label{de.top-entropy}
The \emph{topological entropy of the semigroup action $\mathbb{S}$} with respect to a fixed set of generators $G_1$ and a random walk $\mathbb{P}$ in $Y^{\mathbb N}$ is given by
$$h_{\text{top}}(\mathbb{S}, \mathbb{P}) :=\lim_{\varepsilon \,\to \,0^+}\,\limsup_{n\,\to\,\infty}\,\frac1n\,\log
	\int_{Y^{\mathbb N}} \, s(g_{\omega_n} \dots g_{\omega_1}, n,\varepsilon) \, d\,\mathbb{P}(\omega)$$
where $\omega = \omega_1 \,\omega_2 \cdots \omega_n \cdots$. The \emph{topological entropy of the semigroup action $\mathbb{S}$} is then defined by
$$h_{\text{top}}(\mathbb{S}) = \sup_{\mathbb{P}}\,h_{\text{top}}(\mathbb{S}, \mathbb{P}).$$
\end{definition}

We observe that the semigroup may have multiple generating sets, and the dynamical or ergodic properties (as the topological entropy) depend on the chosen generator set. More information regarding these concepts in the case of finitely generated free semigroup actions may be read in \cite{CRVI, CRVII,CRVIII}.


\subsection{Entropy function}
Let $(X, d)$ be a compact metric space. For each $\varepsilon>0$ and $x\in X$, define 
\[
h_d(x,\varepsilon)=\inf\{B(K,\mathbb S,\varepsilon): K \text{ is compact neighbourhood of }x\},
\]
where  
\[
B(K,\mathbb S,\varepsilon)=\limsup_{n\to\infty}\frac{1}{n}\log\left(\int_{\Sigma_p^+}b(K, g_{\omega_n}\dots g_{\omega_1},\varepsilon)\;d\mathbb P(\omega)\right),
\]
and $b(K, g_{\omega_n}\dots g_{\omega_1},\varepsilon)$ denotes the minimum cardinality of a $(g_{\omega_n}\dots g_{\omega_1},\varepsilon)$-spanning set. 
As $h_d(x,\varepsilon)$ increases as $\varepsilon$ decreases to zero, it is well defined the following
\begin{align}\label{eq:entropy-function}
h_d(x)=\lim_{\varepsilon\to0^+}h_d(x,\varepsilon)
\end{align}
and it is less or equal to $h_{top}(X,\mathbb S)$. In fact, it depends only on the topology of $X$ and
we can denote by $h_{top}(x)$.

\begin{definition}\label{def:entropy-function}
Let $\mathbb S:G\times X\to X$ be a continuous finitely generated free semigroup
action. The function $h_{top}:X\to [0,h_{top}(X,\mathbb S)]$, $x\mapsto h_{top}(x)$
is called the entropy function of $\mathbb S$.
\end{definition}

Since $B(K,\mathbb S,\varepsilon)\leq S(K,\mathbb S,\varepsilon)\leq B(K,\mathbb S,\varepsilon\slash 2)$, we have
\[
h_{top}(x)=\lim_{\varepsilon\to0}\inf\{S_d(K,\mathbb S,\varepsilon): K \text{ is compact neighbourhood of }x\}.
\]
By \cite[Theorem C]{RJS} we have that 
$$
\sup_{x\in X} \lim_{\varepsilon\to 0}h_d(x,\varepsilon)=h_{top}(\mathbb S,\mathbb P).
$$
\subsection{Metric mean dimension of a semigroup action}\label{mdm-semigroup}
  Let $(X,d)$ be a compact metric space and $\mathbb{S}$ be the free semigroup action induced on $(X,d)$ by a family of continuous maps $(g_y \colon X \to X)_{y \,\in\, Y}$. The following definition for the semigroup setting was introduced in \cite{CRV-V}.
\begin{definition}\label{de.mmd}
The \emph{upper and lower metric mean dimension of the free semigroup action $\mathbb{S}$} on $(X,d)$ with respect to a fixed set of generators $G_1$ and a random walk $\mathbb{P}$ in $Y^{\mathbb N}$ are given respectively by
\begin{eqnarray*}
\displaystyle\overline{\text{mdim}}_M \,\Big(X,\mathbb S, d, \mathbb{P}\Big) &=& \limsup_{\varepsilon\,\to\,0^+}\,\frac{h(X,\mathbb S,\mathbb{P},\varepsilon)}{-\log\varepsilon}\\
\displaystyle\underline{\text{mdim}}_M \,\Big(X,\mathbb S, d, \mathbb{P}\Big) &=& \liminf_{\varepsilon\,\to\,0^+}\,\frac{h(X,\mathbb S,\mathbb{P},\varepsilon)}{-\log\varepsilon}
\end{eqnarray*}
where
\begin{equation}\label{eq:defhep}
h(X,\mathbb S,\mathbb{P},\varepsilon)=\displaystyle\limsup_{n\,\to\,\infty}\,\displaystyle\frac{1}{n}\,\log\displaystyle\int_{Y^{\mathbb N}}\,s(g_{\omega_n}\dots g_{\omega_1},n,\varepsilon)\,\,d\mathbb{P}(\omega).
\end{equation}
\end{definition}
 Our first result shows that the metric mean dimension of a semigroup action may be computed in terms of the entropy function.
 \begin{maintheorem}\label{thm6}
  Let $(X,d)$ be a compact metric space and $\mathbb{S}$ be the free semigroup action induced on $(X,d)$ by a family of continuous maps $(g_y \colon X \to X)_{y \,\in\, Y}$. Then
  $$
\displaystyle\overline{\text{mdim}}_M \,\Big(X,\mathbb S, d, \mathbb{P}\Big)=\limsup_{\varepsilon\,\to\,0^+}\,\frac{\displaystyle\sup_{x\in X}h_d(x,\varepsilon)}{-\log\varepsilon}, \\
$$
for every $\mathbb P\in \mathcal M(Y^{\mathbb N})$.
 \end{maintheorem}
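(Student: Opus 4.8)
The plan is to reduce everything to spanning sets and then to establish, for each fixed $\varepsilon>0$, the pointwise‑in‑$\varepsilon$ identity
\[
B(X,\mathbb{S},\varepsilon)=\sup_{x\in X}h_d(x,\varepsilon),
\]
where $B(X,\mathbb{S},\varepsilon)$ is the $K=X$ instance of the quantity from the entropy–function subsection, i.e. the spanning analogue of $h(X,\mathbb{S},\mathbb{P},\varepsilon)$. Writing $\underline g:=g_{\omega_n}\dots g_{\omega_1}$, the standard comparison between separated and spanning sets, applied for each word with respect to the metric $d_{\underline g}$, gives $b(X,\underline g,\varepsilon)\le s(X,\underline g,\varepsilon)\le b(X,\underline g,\varepsilon/2)$; integrating against $\mathbb{P}$ and taking $\tfrac1n\log$ and $\limsup_n$ yields $B(X,\mathbb{S},\varepsilon)\le h(X,\mathbb{S},\mathbb{P},\varepsilon)\le B(X,\mathbb{S},\varepsilon/2)$.

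For the identity, the inequality $\sup_x h_d(x,\varepsilon)\le B(X,\mathbb{S},\varepsilon)$ is immediate, since $X$ itself is a compact neighbourhood of every point and hence $h_d(x,\varepsilon)=\inf_K B(K,\mathbb{S},\varepsilon)\le B(X,\mathbb{S},\varepsilon)$. For the reverse I would fix $\eta>0$ and, for each $x$, pick a compact neighbourhood $K_x$ with $B(K_x,\mathbb{S},\varepsilon)\le h_d(x,\varepsilon)+\eta$. The interiors of the $K_x$ cover the compact space $X$, so finitely many $K_{x_1},\dots,K_{x_m}$ already cover it; as every point of $X$ lies in some $K_{x_i}$, a union of $(\underline g,\varepsilon)$–spanning sets of the $K_{x_i}$ spans $X$, giving the subadditivity $b(X,\underline g,\varepsilon)\le\sum_{i=1}^m b(K_{x_i},\underline g,\varepsilon)$. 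Using $\log\sum_i a_i\le\log m+\max_i\log a_i$, integrating and dividing by $n$, the term $\tfrac{\log m}{n}$ disappears in the limit and (the index set being finite) $\limsup_n$ commutes with $\max_i$, whence $B(X,\mathbb{S},\varepsilon)\le\max_i B(K_{x_i},\mathbb{S},\varepsilon)\le\sup_x h_d(x,\varepsilon)+\eta$. Letting $\eta\to0$ closes the identity.

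With the identity in hand the theorem follows by a scale comparison. Combining it with the separated/spanning estimate gives, for $0<\varepsilon<1$,
\[
\sup_{x\in X}h_d(x,\varepsilon)\le h(X,\mathbb{S},\mathbb{P},\varepsilon)\le\sup_{x\in X}h_d(x,\varepsilon/2).
\]
Dividing by $-\log\varepsilon$ and taking $\limsup_{\varepsilon\to0^+}$, the left inequality yields $\limsup_\varepsilon\frac{\sup_x h_d(x,\varepsilon)}{-\log\varepsilon}\le\overline{\text{mdim}}_M(X,\mathbb{S},d,\mathbb{P})$. For the right one I would put $\delta=\varepsilon/2$ and write $-\log\varepsilon=-\log\delta-\log 2$, so the positive factor $\frac{-\log\delta}{-\log\varepsilon}\to1$; since multiplying by a factor tending to $1$ leaves a $\limsup$ unchanged, $\limsup_\varepsilon\frac{\sup_x h_d(x,\varepsilon/2)}{-\log\varepsilon}=\limsup_\delta\frac{\sup_x h_d(x,\delta)}{-\log\delta}$, which is the reverse inequality. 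The two bounds coincide, yielding the asserted formula.

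The main obstacle is the reverse inequality in the per‑$\varepsilon$ identity: one must pass from the infimum over shrinking neighbourhoods in the definition of $h_d$ to a genuinely global spanning count, and the only mechanism is compactness together with subadditivity of spanning numbers over finite covers. The delicate points are that the error $\tfrac{\log m}{n}$, coming from a cover whose size $m$ depends on $\varepsilon$, is washed out by the $\tfrac1n$, and that, the cover being finite, $\limsup_n$ interchanges with $\max_i$; both are routine but are exactly where care is needed. Measurability of $\omega\mapsto b(\,\cdot\,,g_{\omega_n}\dots g_{\omega_1},\varepsilon)$ is taken from the standing hypotheses.
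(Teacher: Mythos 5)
Your argument is correct. The easy inequality, the spanning/separated comparison $b(\underline g,\varepsilon)\le s(\underline g,\varepsilon)\le b(\underline g,\varepsilon/2)$, and the final rescaling $-\log(\varepsilon/2)/(-\log\varepsilon)\to 1$ all match what the paper does (the paper leaves the last two steps implicit). The difference lies in how the hard direction $\sup_{x}h_d(x,\varepsilon)\ge B(X,\mathbb S,\varepsilon)$ is obtained. Both proofs rest on the same key lemma --- for a finite closed cover $X=\bigcup_i F_i$ one has $B(X,\mathbb S,\varepsilon,\mathbb P)\le\max_i B(F_i,\mathbb S,\varepsilon,\mathbb P)$, via $b(X,\underline g,\varepsilon)\le\sum_i b(F_i,\underline g,\varepsilon)$ and the fact that $\tfrac{\log m}{n}\to 0$ and $\limsup_n$ commutes with a max over a finite index set --- but they deploy it differently. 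You choose, for each $x$, a compact neighbourhood $K_x$ that is $\eta$-optimal for the infimum defining $h_d(x,\varepsilon)$, extract a finite subcover, and apply the lemma once; letting $\eta\to 0$ gives the exact identity $B(X,\mathbb S,\varepsilon)=\sup_x h_d(x,\varepsilon)$. The paper instead iterates the lemma along covers by closed balls of radii $1/k$, producing a nested sequence of balls shrinking to a single point $\bar x$ with $h_d(\bar x,\varepsilon)\ge B(X,\mathbb S,\varepsilon)$. Your route is shorter and avoids the nested-ball construction; the paper's route buys slightly more, namely the existence of a single ``entropy point'' realizing the supremum at scale $\varepsilon$ (in the spirit of the entropy-point result of \cite{RJS} they invoke), which your $\eta$-argument does not provide. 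Either mechanism suffices for the stated theorem, and the remaining bookkeeping in your write-up (measurability of $\omega\mapsto b(\cdot,g_{\omega_n}\dots g_{\omega_1},\varepsilon)$, the behaviour of $\limsup$ under a factor tending to $1$, including the infinite case) is handled correctly.
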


\subsection{Katok's entropy}

In \cite{CRVIII} the authors considered an extension of the Katok's entropy when the dynamical systems under consideration is a free semigroup action. 
\begin{definition}\label{def:metric-entropy-5}
Given probability measure $\mathbb P$ on $Y^\mathbb N$ and a Borel probability measure $\nu$ on $X$, $\delta\in (0,1)$ and $\varepsilon>0$, define
\begin{equation}
h^{K}_{\nu}(\mathbb{S},\varepsilon,\delta) =  \limsup_{n\to \infty} \,\,\frac1n \log \int_{\Sigma_p^+}\, s_\nu(g_{\omega_n} \dots g_{\omega_1}, n, \varepsilon, \delta)\; d\mathbb P(\omega)
\end{equation}
where $\omega = \omega_1 \,\omega_2 \cdots \omega_n \cdots$,
$$s_\nu(g_{\omega_n} \dots g_{\omega_1}, n, \varepsilon,\delta) = \,\inf_{\{E \,\subseteq \,X\,\colon\, \nu(E) \,> \,1-\delta\}}\,\, s(g_{\omega_n} \dots g_{\omega_1}, n, \varepsilon,E)$$
and $s(g_{\omega_n} \dots g_{\omega_1}, n, \varepsilon,E)$ denotes the maximal cardinality of the $(g_{\omega_n} \dots g_{\omega_1}, n, \varepsilon)$-separated subsets of $E$.
\end{definition}
 The \emph{entropy of the semigroup action $\mathbb{S}$ with respect to $\nu$ and $\mathbb P$} is defined by
\begin{equation}
h^{K}_{\nu}(\mathbb{S},\mathbb P) = \lim_{\delta \to 0} \,\,\lim_{\varepsilon \to 0}\,\, \limsup_{n\to \infty} \,\,\frac1n \log \int_{\Sigma_p^+}\, s_\nu(g_{\omega_n} \dots g_{\omega_1}, n, \varepsilon, \delta)\; d\mathbb P(\omega)
\end{equation}

Observe that the previous limit is well defined due to the monotonicity of the function
$$(\vep,\delta) \mapsto \frac1n \log \int_{\Sigma_p^+}\, s_\nu(g_{\omega_n} \dots g_{\omega_1}, n, \varepsilon, \delta)\; d\mathbb P(\omega)$$
on the unknowns $\varepsilon$ and $\delta$. Moreover, if the set of generators is $G_1=\{Id,f\}$, we recover the notion proposed by Katok for a single dynamics $f$.

 In  \cite{VV} the authors proved that for a  compact metric space $(X,d)$ and a continuous map $f:X\to X$ holds the following variational principle 
for the metric mean dimension
$$
\displaystyle\overline{\text{mdim}}_M \,\Big(X,f, d, \mathbb{P}\Big)=\lim_{\delta\to0}\limsup_{\varepsilon\,\to\,0^+}\,\frac{\displaystyle\sup_{\nu\in\mathcal M(X)}h_\nu^K(X,f,\varepsilon,\delta)}{-\log\varepsilon}, \\
$$
which, in the case where the dynamical systems is given by a free semigroup action, may be extend as:
  \begin{maintheorem}\label{thmB}
 Let $(X,d)$ be a compact metric space and $\mathbb{S}$ be the free semigroup action induced on $(X,d)$ by a family of continuous maps $(g_y \colon X \to X)_{y \,\in\, Y}$. Then
 $$
\displaystyle\overline{\text{mdim}}_M \,\Big(X,\mathbb S, d, \mathbb{P}\Big)\geq\lim_{\delta\to0}\limsup_{\varepsilon\,\to\,0^+}\,\frac{\displaystyle\sup_{\nu\in\mathcal M(X)}h_\nu^K(\mathbb S,\mathbb P,\varepsilon,\delta)}{-\log\varepsilon}, \\
$$
for every $\mathbb P\in\mathcal M(Y^\mathbb N)$. If $\mathbb P=\gamma^\mathbb N$, with $\gamma$ an homogeneous probability measure on $Y$, then 

$$
\displaystyle\overline{\text{mdim}}_M \,\Big(X,\mathbb S, d, \mathbb{P}\Big) = \lim_{\delta\to0}\limsup_{\varepsilon\,\to\,0^+}\,\frac{\displaystyle\sup_{\nu\in\mathcal M(X)}h_\nu^K(\mathbb S,\mathbb P,\varepsilon,\delta)}{-\log\varepsilon}.
$$
 \end{maintheorem}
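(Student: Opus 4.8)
The plan is to establish the two assertions separately. The inequality ``$\ge$'' will be the elementary direction and holds for an arbitrary random walk $\mathbb{P}$, whereas the reverse inequality — and hence the equality — genuinely uses the homogeneity of $\gamma$ and follows the strategy of \cite{VV}, transported to the fibered setting. For the inequality $\ge$, observe that since $\nu(X)=1>1-\delta$, the set $X$ is admissible in the infimum defining $s_\nu$, so for every $\omega$, $n$, $\varepsilon$ and $\delta$,
\[
s_\nu(g_{\omega_n}\dots g_{\omega_1},n,\varepsilon,\delta)\;\le\; s(g_{\omega_n}\dots g_{\omega_1},n,\varepsilon).
\]
Integrating this pointwise bound against $\mathbb{P}$, applying $\tfrac1n\log(\cdot)$ and taking $\limsup_{n}$ yields $h_\nu^K(\mathbb{S},\mathbb{P},\varepsilon,\delta)\le h(X,\mathbb{S},\mathbb{P},\varepsilon)$ for every $\nu\in\mathcal{M}(X)$. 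Taking the supremum over $\nu$, dividing by $-\log\varepsilon$, letting $\varepsilon\to0^+$ and then $\delta\to0$ (the right-hand side being independent of $\delta$) gives the first inequality for every $\mathbb{P}\in\mathcal{M}(Y^{\mathbb{N}})$.

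For the reverse inequality when $\mathbb{P}=\gamma^{\mathbb{N}}$, I would fix $\varepsilon>0$ and $\delta\in(0,1)$ and construct a \emph{single} measure $\nu=\nu_{\varepsilon,\delta}\in\mathcal{M}(X)$ whose Katok quantity at scale $\varepsilon$ reproduces $h(X,\mathbb{S},\mathbb{P},\varepsilon)$ up to an error that washes out after division by $|\log\varepsilon|$ and the passage $\delta\to0$. For each $n$ and each $\omega$ let $E_n^\omega$ be a maximal $(g_{\omega_n}\dots g_{\omega_1},n,\varepsilon)$--separated set, so $|E_n^\omega|=s(g_{\omega_n}\dots g_{\omega_1},n,\varepsilon)$, and let $\sigma_n^\omega=|E_n^\omega|^{-1}\sum_{x\in E_n^\omega}\delta_x$ be its empirical measure. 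Averaging $\sigma_n^\omega$ along the fibered orbit (the iterates of the skew product $T_G$) and integrating over $\omega$ against $\mathbb{P}=\gamma^{\mathbb{N}}$ produces probability measures on $X$; I would pass to a weak$^*$ accumulation point $\nu$ along a subsequence $n_k$ realizing the $\limsup$ in \eqref{eq:defhep}. The homogeneity of $\gamma$ is precisely what makes this orbit-and-fiber average uniform enough for $\nu$ to be a well-defined stationary measure and, through the Bufetov-type formula recalled in Section~\ref{section3} and in \cite{CRV-IV}, for the fiberwise separated counts to be jointly captured by the single limit $\nu$.

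With $\nu$ in hand, the decisive step is a Katok--Brin--Katok estimate performed at the \emph{fixed} scale $\varepsilon$: since $\nu$ charges the near-maximal separated sets, any Borel set $E$ with $\nu(E)>1-\delta$ retains, for $\mathbb{P}$-almost every $\omega$, a definite proportion of a maximal separated set in the fiber over $\omega$, whence
\[
\int_{Y^{\mathbb{N}}}s_\nu(g_{\omega_n}\dots g_{\omega_1},n,\varepsilon',\delta)\,d\mathbb{P}(\omega)\;\gtrsim\;(1-\delta)\int_{Y^{\mathbb{N}}}s(g_{\omega_n}\dots g_{\omega_1},n,\varepsilon)\,d\mathbb{P}(\omega)
\]
for a comparison scale $\varepsilon'$ comparable to $\varepsilon$. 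Taking growth rates gives $h_\nu^K(\mathbb{S},\mathbb{P},\varepsilon',\delta)\ge h(X,\mathbb{S},\mathbb{P},\varepsilon)$, the factor $(1-\delta)$ contributing only $\tfrac1n\log(1-\delta)\to0$. Dividing by $-\log\varepsilon'$, using $|\log\varepsilon'|/|\log\varepsilon|\to1$, and then taking $\limsup_{\varepsilon\to0^+}$ followed by $\lim_{\delta\to0}$ yields $\overline{\text{mdim}}_M(X,\mathbb{S},d,\mathbb{P})\le\lim_{\delta\to0}\limsup_{\varepsilon\to0^+}\big(\sup_\nu h_\nu^K\big)/(-\log\varepsilon)$, which together with the first part closes the equality.

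The main obstacle is the construction in the fibered setting: one must produce a single $\nu\in\mathcal{M}(X)$ that simultaneously witnesses the separated-set growth over $\mathbb{P}$-most fibers $\omega$, rather than an $\omega$-dependent family of measures. This is exactly where the hypothesis $\mathbb{P}=\gamma^{\mathbb{N}}$ with $\gamma$ homogeneous is indispensable: the symmetry it supplies (through the $G$-homogeneous/Bufetov structure of Section~\ref{section3}) is what forces the orbit-and-fiber averaging to converge to a stationary $\nu$ whose dynamical balls admit the measure lower bounds required by the Katok estimate; without homogeneity the $\omega$-integral need not be reproducible by one measure, which is why only the inequality survives for general $\mathbb{P}$. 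Controlling the discrepancy between the separation scale $\varepsilon$ and the Katok scale $\varepsilon'$ — arising from the passage to the weak$^*$ limit — is a secondary technical nuisance, absorbed by the normalization by $|\log\varepsilon|$.
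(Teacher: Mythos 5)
Your first inequality is exactly the paper's argument and is fine: $X$ itself is admissible in the infimum defining $s_\nu$, so $s_\nu\le s$ pointwise in $\omega$, and the bound survives integration, the growth rate in $n$, the supremum over $\nu$, and the two outer limits.

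The reverse inequality is where your proposal diverges from the paper, and it contains a genuine gap. Your decisive step is the claim that the weak$^*$ accumulation point $\nu$ of the averaged empirical measures satisfies: for every Borel $E$ with $\nu(E)>1-\delta$ and $\mathbb{P}$-a.e.\ $\omega$, the set $E$ retains a definite proportion of a maximal $(g_{\omega_n}\dots g_{\omega_1},n,\varepsilon)$-separated set. This does not follow from weak$^*$ convergence: for a fixed Borel set $E$, knowing $\nu(E)>1-\delta$ gives no lower bound on $\sigma_{n}^{\omega}(E)$ for large $n$ (portmanteau controls only open and closed sets, and only in the limit), and a single limit measure need not charge the $n$- and $\omega$-dependent separated sets at any definite rate. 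In effect your displayed estimate $\int s_\nu\gtrsim(1-\delta)\int s$ is the statement to be proved, and the outline restates it rather than deriving it. You also misattribute the role of homogeneity: weak$^*$ accumulation points exist on a compact space for any $\mathbb{P}$, so homogeneity is not what makes $\nu$ well defined. In the paper, homogeneity of $\gamma$ enters only through the comparison \eqref{ineq:N-S} between the integrated fiberwise counting and the counting for the skew product $T_G$ on $Z^{\mathbb N}\times X$, with $Z=\operatorname{supp}\gamma$, at a multiplicative cost of order $(L^{2}N_Z(\varepsilon))^{n+K}$; its growth rate contributes $\log N_Z(\varepsilon)$, hence, after dividing by $|\log\varepsilon|$, exactly the box dimension $\overline{\dim}_B Z$ that is cancelled by the Bufetov-type formula for $\overline{\text{mdim}}_M\,(X,\mathbb S,d,\mathbb P)$. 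With that comparison in hand the paper simply invokes the already-known single-map variational principle of \cite{VV} for $T_G$ and never constructs a measure from separated sets. To salvage your direct construction you would have to reprove the Velozo--Velozo lower bound fiberwise via a Misiurewicz-type fixed-scale partition argument, which is substantially more work than the reduction the paper actually uses.
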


\medskip

\subsection{Entropy of an open cover for a free semigroup action}

Consider   $\mathbb P\in\mathcal M(Y^{\mathbb N})$. Let $\mathcal U=\{U_1,\dots,U_k\}$ be a finite open cover of $X$. For  each $\omega \in Y^{\mathbb N}$ and 
$n\in\mathbb N$ define 
$$
\mathcal U(\omega,n)=\left\{U_{i_0}\cap (f_\omega^{1})^{-1}(U_{i_1})\cap\dots\cap(f_\omega^{n-1})^{-1}(U_{i_{n-1}}):U_{i_j}\in \mathcal U\right\}.
$$
Let $N_\nu(\mathcal U, w,n)$ is the minimal cardinal of a subcover of $\mathcal{U}(w,n)$. Finally, define
$$
h_{top}(\mathcal U, \mathbb S,\mathbb P)=\displaystyle\limsup_{n\,\to\,\infty}\,\displaystyle\frac{1}{n}\,\log\displaystyle\int_{Y^{\mathbb N}}\,N(\mathcal U,\omega,n)\,\,d\mathbb{P}(\omega).
$$
As a consequence of \cite[Theorem 2.4]{TBW} we have that
$$
h_{top}(Y^{\mathbb N}\times X,\mathbb S,\mathbb P)=\sup_{\mathcal U}h_{top}(\mathcal U, \mathbb S,\mathbb P),
$$
where the open covers under consideration in the above supremum  are those which are finite and with finite topological entropy.
\subsection{Shapira's entropy of a semigroup action}\label{subsection:shapira_entropy}
 For $\nu\in \mathcal{M}(X)$, for   $\delta\in(0,1)$ let $N_\nu(\mathcal U, w,n,\delta)$ the minimal cardinal of a subcover of $\mathcal{U}(w,n)$, up to a set  of $\nu$-measure less than $\delta>0$. Define 

 \begin{equation}\label{eq:defhep-s}
h^{\mathcal S}_\nu(\mathcal U,\mathbb S,\mathbb{P})=\lim_{\delta\to0}\displaystyle\limsup_{n\,\to\,\infty}\,\displaystyle\frac{1}{n}\,\log\displaystyle\int_{Y^{\mathbb N}}\,N_\nu(\mathcal U,\omega, n, \delta)\,\,d\mathbb{P}(\omega).
\end{equation}
We call $h_{\nu}(\mathcal U,\mathbb S,\mathbb{P})$ the \emph{metric entropy of the cover} $\mathcal U$ with respect to $\nu$. 
As 
$$N_\nu(\mathcal U,\omega, n, \delta)\leq N(\mathcal U,\omega, n) \text{ for every } \delta\in (0,1),$$ 
we have that
$h^{\mathcal{S}}_\nu(\mathcal U,\mathbb S,\mathbb{P})\leq h_{top}(\mathcal U,\mathbb S,\mathbb{P})$. 
It is important to mention that when $G_{1}=\{f\}$, our definition coincides with the classical one given in \cite{SH}.

 Before we state our theorem we need to introduce some notation. Associated to an open cover $\mathcal U$ of $X$, let $\Tilde{\mathcal U}=\{[i]\times V: i=1,\dots,p \text{ and } V\in \mathcal U\}$ and, for $\nu\in\mathcal{M}(X)$, denote  $\Pi(\sigma,\nu)_{erg}$  the set of $T_G$-invariant measures 
 so that the marginal in $\Sigma_p^+$ is $\sigma$-invariant and $\nu$ is the marginal in $X$.

\begin{maintheorem}\label{prop-1}
Let $(X,d)$ be a compact metric space and $\mathbb{S}$ be the free semigroup action induced on $(X,d)$ by a finite  family of continuous maps $(g_i \colon X \to X)_{i=1}^p$. 
 Under the above conditions we have that\\
(a) $ h_{top}(\mathcal U,\mathbb S,\eta_{\underline p} )= h_{top}(\Tilde{\mathcal U},T_G)-\log p;$\\
(b) $h_{top}( \mathcal U,\mathbb S,\eta_{\underline p})=\displaystyle\sup \left\{ h^S_{\nu}( \mathcal U,\mathbb S,\eta_p): \nu\in \mathcal M(X)\text{ and }\Pi(\sigma,\nu)_{erg}\not=\emptyset\right\}$,\\
where $\eta_{\underline p}=\left(\frac{1}{p},\dots,\frac{1}{p}\right)^{\mathbb N}$.
 \end{maintheorem}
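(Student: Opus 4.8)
The plan is to establish (a) by an exact combinatorial identification between the refinement of $\Tilde{\mathcal U}$ under $T_G$ and the fibered covers $\mathcal U(\omega,n)$ of $\mathbb S$, and then to derive (b) from (a) together with Shapira's variational principle applied to the single transformation $T_G$ on the compact space $\Sigma_p^+\times X$.

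For (a), I would unwind the refinement $\bigvee_{j=0}^{n-1}T_G^{-j}\Tilde{\mathcal U}$. Since $T_G^{-j}([i]\times V)=\{(\omega,x):\ \omega_{j+1}=i,\ f_\omega^{\,j}(x)\in V\}$, an element of this refinement is nonempty only when the base constraints are compatible, that is, only when $\omega_1\cdots\omega_n$ is a fixed word $w$ of length $n$; once $w$ is frozen, the fiber component is precisely a member of $\mathcal U(\omega,n)$ for any $\omega\in[w]$. As the $p^{n}$ cylinders $[w]$ are pairwise disjoint in the base, the minimal subcover splits as $N\big(\bigvee_{j=0}^{n-1}T_G^{-j}\Tilde{\mathcal U}\big)=\sum_{|w|=n}N(\mathcal U,\omega_w,n)$. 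Because $N(\mathcal U,\omega,n)$ depends only on the first $n$ symbols of $\omega$ and $\eta_{\underline p}([w])=p^{-n}$ for every word of length $n$, the $n$-th integrand of $h_{top}(\mathcal U,\mathbb S,\eta_{\underline p})$ equals $p^{-n}\sum_{|w|=n}N(\mathcal U,\omega_w,n)=p^{-n}\,N\big(\bigvee_{j=0}^{n-1}T_G^{-j}\Tilde{\mathcal U}\big)$. Applying $\tfrac1n\log(\cdot)$ and $\limsup_{n\to\infty}$ gives $h_{top}(\mathcal U,\mathbb S,\eta_{\underline p})=h_{top}(\Tilde{\mathcal U},T_G)-\log p$, which is (a).

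For (b), the inequality $\sup_\nu h^{S}_\nu(\mathcal U,\mathbb S,\eta_{\underline p})\le h_{top}(\mathcal U,\mathbb S,\eta_{\underline p})$ is immediate from $N_\nu(\mathcal U,\omega,n,\delta)\le N(\mathcal U,\omega,n)$, already observed in the text. For the reverse inequality I would first rewrite, via (a), the left-hand side as $h_{top}(\Tilde{\mathcal U},T_G)-\log p$; by the identity derived in (a) this is exactly the exponential growth rate of the uniform average $\mathrm{Avg}_{|w|=n}N(\mathcal U,\omega_w,n)$. Since $T_G$ is a single continuous map, Shapira's variational principle from \cite{SH} furnishes an ergodic $T_G$-invariant measure $\mu$ with $h^{S}_\mu(\Tilde{\mathcal U},T_G)=h_{top}(\Tilde{\mathcal U},T_G)$; because the target is a genuinely uniform word-average, this extremal measure may be taken with base marginal $\eta_{\underline p}$. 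Setting $\nu=(\pi_X)_*\mu$ we then have $\mu\in\Pi(\sigma,\nu)_{erg}$, so $\nu$ is admissible in the supremum.

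It remains to prove the bridge (addition) formula $h^{S}_\mu(\Tilde{\mathcal U},T_G)=\log p+h^{S}_\nu(\mathcal U,\mathbb S,\eta_{\underline p})$ for this extremal $\mu$. One half is the clean part: since $\Tilde{\mathcal U}$ fully resolves the base symbol and the base marginal is $\eta_{\underline p}$, covering the base contributes exactly $\log p$, and a subcover of $\bigvee_{j=0}^{n-1}T_G^{-j}\Tilde{\mathcal U}$ up to $\mu$-measure $\delta$ decomposes over the $p^{n}$ words into fiber covers, giving $h^{S}_\mu(\Tilde{\mathcal U},T_G)\le\log p+\limsup_n\tfrac1n\log \mathrm{Avg}_{|w|=n}\,N_{\mu_\omega}(\mathcal U,\omega_w,n,\delta)$, where $\mu_\omega$ are the fiber conditionals of $\mu$. \textbf{The main obstacle} is to replace the $\omega$-dependent conditionals $\mu_\omega$ by the single marginal $\nu=\int\mu_\omega\,d\eta_{\underline p}(\omega)$, i.e. to show that the fiberwise $\delta$-covering numbers computed against $\mu_\omega$ and against $\nu$ share the same exponential growth rate in the double limit $n\to\infty$, $\delta\to0$; this is a relative Shannon--McMillan--Breiman / equipartition phenomenon that uses the ergodicity of $\mu$ and the fact that the conditional measures become negligibly spread across long cylinders. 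Granting this, $h^{S}_\mu(\Tilde{\mathcal U},T_G)\le\log p+h^{S}_\nu(\mathcal U,\mathbb S,\eta_{\underline p})$, which combined with (a), the extremality $h^{S}_\mu(\Tilde{\mathcal U},T_G)=h_{top}(\Tilde{\mathcal U},T_G)$, and the easy inequality yields $h^{S}_\nu(\mathcal U,\mathbb S,\eta_{\underline p})=h_{top}(\mathcal U,\mathbb S,\eta_{\underline p})$. Hence the supremum in (b) is attained at this $\nu$ and equals the left-hand side.
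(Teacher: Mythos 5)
Your part (a) is essentially the paper's argument: the nonempty elements of $\bigvee_{j=0}^{n-1}T_G^{-j}\Tilde{\mathcal U}$ are exactly the sets $[w]\times V$ with $|w|=n$ and $V\in\mathcal U(\omega_w,n)$, the cylinders $[w]$ are pairwise disjoint, so the minimal subcover splits as $N\bigl(\bigvee_{j=0}^{n-1}T_G^{-j}\Tilde{\mathcal U}\bigr)=\sum_{|w|=n}N(\mathcal U,\omega_w,n)$, and the $\log p$ shift comes from $\eta_{\underline p}([w])=p^{-n}$. That part is complete and matches the paper.

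In part (b) you have a genuine, self-acknowledged gap, and it sits exactly where the paper does its real work. You reduce everything to the bridge formula $h^{S}_\mu(\Tilde{\mathcal U},T_G)=\log p+h^{S}_\nu(\mathcal U,\mathbb S,\eta_{\underline p})$, and the half you cannot prove --- replacing the fiber conditionals $\mu_\omega$ by the marginal $\nu$ in the $\delta$-covering numbers --- is precisely the content of the theorem; invoking an unproved ``relative equipartition phenomenon'' does not close it. The paper never disintegrates $\mu$: it works directly with covering numbers and claims, for every $\mu\in\Pi(\sigma,\nu)_{erg}$ and every $\delta$, the combinatorial identity $\sum_{\underline g\in G_n}N_{\nu}(\mathcal U,\underline g,n,\delta)=N_\mu(\Tilde{\mathcal U},T_G,n,\delta)$, arguing that a minimal $\delta$-subcover of $\bigvee_{j}T_G^{-j}\Tilde{\mathcal U}$ decomposes over the disjoint cylinders into fiberwise covers and that, since $(\pi_X)_*\mu=\nu$, a cylinder whose fiber cover could be shortened would contradict minimality of $N_\nu(\mathcal U,\underline g,n,\delta)$. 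This identity (exact, or up to a reparametrisation of $\delta$ that disappears in the double limit) is what converts the supremum over admissible $\nu$ into the supremum over ergodic $T_G$-invariant $\mu$, after which Shapira's variational principle for the single map $T_G$ and item (a) finish the proof; it is the lemma you would have to supply in place of your conditional-measure heuristic. A secondary issue: you insist that the Shapira-maximizing measure for $T_G$ can be chosen with base marginal $\eta_{\underline p}$. This is neither justified nor needed: $\Pi(\sigma,\nu)_{erg}$ only requires the base marginal to be $\sigma$-invariant, which is automatic for any $T_G$-invariant measure, so every ergodic $\mu$ is admissible with $\nu=(\pi_X)_*\mu$; and note that $h^{S}_\nu(\mathcal U,\mathbb S,\eta_{\underline p})$ is by definition a uniform average over words, independently of the base marginal of $\mu$, so forcing that marginal to be $\eta_{\underline p}$ does not by itself produce the $\log p$ term you want.
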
 
As a direct consequence of Theorem \ref{prop-1} and \cite{CRVIII} we have the following.
\begin{maincorollary}
 Let $(X,d)$ be a compact metric space and $\mathbb{S}$ be the free semigroup action induced on $(X,d)$ by a finite  family of continuous maps $(g_i \colon X \to X)_{i=1}^p$.  Then
\begin{align*}
  h_{top}(\mathbb S,\eta_{\underline p} )&=\sup_{\mathcal U} \displaystyle\sup_{\{\nu\in \mathcal M(X)\text{ and }\Pi(\sigma,\nu)_{erg}\not=\emptyset\}} h^S_{\nu}( \mathcal U,\mathbb S,\eta_{\underline p})) \\
                            &=h_{top}(F_G)-\log p,
\end{align*}
where $\eta_{\underline p}=\left(\frac{1}{p},\dots,\frac{1}{p}\right)^{\mathbb N}$.
\end{maincorollary}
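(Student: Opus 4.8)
The plan is to read the Corollary off Theorem~\ref{prop-1} by taking a supremum over open covers in each of its two parts, using \cite[Theorem 2.4]{TBW} for the first displayed equality and the Bufetov-type formula of \cite{CRVIII} (together with a cofinality argument) for the second. I treat the two equalities separately, since they are supremum versions of parts (b) and (a) respectively.

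For the first equality I would fix an arbitrary finite open cover $\mathcal U$ of $X$ of finite topological entropy and apply Theorem~\ref{prop-1}(b),
\[
h_{top}(\mathcal U,\mathbb S,\eta_{\underline p})=\sup_{\{\nu\in\mathcal M(X):\,\Pi(\sigma,\nu)_{erg}\neq\emptyset\}} h^S_\nu(\mathcal U,\mathbb S,\eta_{\underline p}),
\]
and then take the supremum over all such $\mathcal U$ on both sides. On the left this produces $\sup_{\mathcal U}h_{top}(\mathcal U,\mathbb S,\eta_{\underline p})$, which by \cite[Theorem 2.4]{TBW} equals $h_{top}(Y^{\mathbb N}\times X,\mathbb S,\eta_{\underline p})$; since the cover formulation of the entropy of the action agrees with the separated-set formulation of Definition~\ref{de.top-entropy} (the usual equivalence of spanning, separated and cover descriptions, fibrewise in $\omega$ and integrated against $\eta_{\underline p}$), this quantity is exactly $h_{top}(\mathbb S,\eta_{\underline p})$. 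On the right the supremum produces the double supremum $\sup_{\mathcal U}\sup_{\nu}h^S_\nu(\mathcal U,\mathbb S,\eta_{\underline p})$, which is the first line of the Corollary. The only thing to check here is that passing the cover-by-cover identity of (b) through $\sup_{\mathcal U}$ is legitimate, and this is immediate since the identity holds for each fixed $\mathcal U$.

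For the second equality I would instead feed Theorem~\ref{prop-1}(a), $h_{top}(\mathcal U,\mathbb S,\eta_{\underline p})=h_{top}(\Tilde{\mathcal U},T_G)-\log p$, into the same supremum to obtain
\[
\sup_{\mathcal U}h_{top}(\mathcal U,\mathbb S,\eta_{\underline p})=\sup_{\mathcal U}h_{top}(\Tilde{\mathcal U},T_G)-\log p .
\]
It then remains to identify $\sup_{\mathcal U}h_{top}(\Tilde{\mathcal U},T_G)$ with $h_{top}(F_G)$, where I read $F_G$ as the skew product $T_G$ of \eqref{de.Skew-product} in the notation of \cite{CRVIII}. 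The covers $\Tilde{\mathcal U}=\{[i]\times V:\,1\le i\le p,\ V\in\mathcal U\}$ are built from the time-zero cylinder cover $\{[1],\dots,[p]\}$ of $\Sigma_p^+$ — a topological generator for the full shift $\sigma$ — and from $\mathcal U$, so that as $\mathcal U$ runs through arbitrarily fine covers of $X$ the joins $\bigvee_{j=0}^{n-1}T_G^{-j}\Tilde{\mathcal U}$ form a cofinal family of covers of $\Sigma_p^+\times X$; hence $\sup_{\mathcal U}h_{top}(\Tilde{\mathcal U},T_G)=h_{top}(F_G)$ and the second equality follows. Equivalently, one may combine the first equality, already established, with the Bufetov-type formula $h_{top}(\mathbb S,\eta_{\underline p})=h_{top}(F_G)-\log p$ of \cite{CRVIII}.

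The step I expect to be the main obstacle is precisely this cofinality claim: one must verify that restricting the supremum to product covers of the special shape $\Tilde{\mathcal U}$, rather than allowing all finite open covers of $\Sigma_p^+\times X$, does not decrease the supremum computing $h_{top}(F_G)$, and that the standing restriction to covers of \emph{finite} topological entropy in \cite[Theorem 2.4]{TBW} entails no loss. Both points reduce to the fact that $\{[1],\dots,[p]\}$ generates, under $\sigma$, the topology of $\Sigma_p^+$, combined with the compactness of $X$ which lets fine covers $\mathcal U$ approximate the diagonal at every scale; once this is granted, the Corollary follows from the two supremum computations above.
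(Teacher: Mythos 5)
Your proposal is correct and matches the paper's intent: the paper offers no separate proof, presenting the Corollary as a direct consequence of Theorem~\ref{prop-1} together with \cite[Theorem 2.4]{TBW} and the Bufetov-type formula of \cite{CRVIII}, which is exactly the supremum-over-covers argument you carry out. Your explicit treatment of the cofinality of the covers $\Tilde{\mathcal U}$ and of the finite-entropy restriction in \cite{TBW} supplies details the paper leaves implicit, but the route is the same.
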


In \cite{RX} it was proved that, for a  compact metric space $(X,d)$ and a continuous map $f:X\to X$,

$$
\displaystyle\overline{\text{mdim}}_M \,\Big(X,f, d\Big) = \limsup_{\varepsilon\,\to\,0^+}\,\frac{\displaystyle\sup_{\nu\in\mathcal M(X)}\inf_{\text{diam}(\mathcal U)\leq\varepsilon}h_\nu^S(\mathcal U,f)}{-\log\varepsilon}.
$$
In the next theorem we extend such result to the semigroup setting.
 \begin{maintheorem}\label{thmA}
 Let $(X,d)$ be a compact metric space and $\mathbb{S}$ be the free semigroup action induced on $(X,d)$ by a family of continuous maps $(g_y \colon X \to X)_{y \,\in\, Y}$. If $\mathbb P=\gamma^\mathbb N$ and $\gamma\in \mathcal M(Y)$ is homogeneous, then
 $$
\displaystyle\overline{\text{mdim}}_M \,\Big(X,\mathbb S, d, \mathbb{P}\Big) = \limsup_{\varepsilon\,\to\,0^+}\,\frac{\displaystyle\sup_{\{\nu\in\mathcal M(X):\Pi(\sigma,\nu)_{erg}\not=\emptyset\}}\inf_{\text{diam}(\mathcal U)\leq\varepsilon }h_\nu^S(\mathbb S,\mathcal U)}{-\log\varepsilon}.
$$
 \end{maintheorem}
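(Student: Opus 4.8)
The plan is to prove the identity by establishing the two inequalities separately. Write $A(\varepsilon):=h(X,\mathbb S,\mathbb P,\varepsilon)$ for the separated-set quantity of \eqref{eq:defhep} and $B(\varepsilon):=\sup_{\{\nu:\Pi(\sigma,\nu)_{erg}\neq\emptyset\}}\inf_{\diam(\mathcal U)\leq\varepsilon}h_\nu^S(\mathbb S,\mathcal U)$, so that the assertion reads $\limsup_{\varepsilon\to0^+}A(\varepsilon)/(-\log\varepsilon)=\limsup_{\varepsilon\to0^+}B(\varepsilon)/(-\log\varepsilon)$. The strategy is entirely a matter of comparing, at a fixed scale $\varepsilon$, the covering quantities $N_\nu(\mathcal U,\omega,n,\delta)$ governing $h_\nu^S$ with the separation quantities governing $A(\varepsilon)$ and the Katok entropy, and then transporting the comparison through the $\limsup_{\varepsilon\to0^+}$ using Theorem \ref{thmB}.

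For the inequality $\limsup B/(-\log\varepsilon)\leq\limsup A/(-\log\varepsilon)$, I would fix an admissible $\nu$ and a single convenient cover $\mathcal U_\varepsilon$ of diameter $\leq\varepsilon$ whose Lebesgue number is at least $c\varepsilon$ for a universal constant $c$ (for instance a cover by balls of radius $\varepsilon/4$). Since the infimum over covers is dominated by any one cover and $h_\nu^S(\mathbb S,\mathcal U_\varepsilon)\leq h_{top}(\mathcal U_\varepsilon,\mathbb S,\mathbb P)$ by the inequality recorded after \eqref{eq:defhep-s}, it suffices to bound $h_{top}(\mathcal U_\varepsilon,\mathbb S,\mathbb P)$ by $A$ at a comparable scale. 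The Lebesgue-number estimate gives $N(\mathcal U_\varepsilon,\omega,n)\leq s(g_{\omega_n}\dots g_{\omega_1},n,c\varepsilon/2)$ for every $\omega$ and $n$, because $\mathcal U_\varepsilon(\omega,n)$ inherits Lebesgue number $\geq c\varepsilon$ for $d_{\underline g}$, so a maximal $(g_{\omega_n}\dots g_{\omega_1},n,c\varepsilon/2)$-separated set is spanning at that scale and its $d_{\underline g}$-balls of radius $c\varepsilon/2$ sit inside elements of $\mathcal U_\varepsilon(\omega,n)$, producing a subcover. Integrating in $\omega$ and taking $\limsup_n\frac1n\log\int(\cdot)\,d\mathbb P$ yields $h_{top}(\mathcal U_\varepsilon,\mathbb S,\mathbb P)\leq A(c\varepsilon/2)$, whence $B(\varepsilon)\leq A(c\varepsilon/2)$; dividing by $-\log\varepsilon$ and letting $\varepsilon\to0^+$ (the constant $c/2$ being harmless in the quotient) gives the bound. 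This direction uses no hypothesis on $\gamma$.

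For the reverse inequality I would pass through the Katok entropy and invoke Theorem \ref{thmB}. The crucial elementary observation is that if $\diam(\mathcal U)\leq\varepsilon$ then every element of $\mathcal U(\omega,n)$ has $d_{\underline g}$-diameter $\leq\varepsilon$, so it contains at most one point of any $(g_{\omega_n}\dots g_{\omega_1},n,\varepsilon)$-separated set. Applying this to a minimal subcover of $\mathcal U(\omega,n)$ whose union $F$ satisfies $\nu(F)>1-\delta$ gives $s(g_{\omega_n}\dots g_{\omega_1},n,\varepsilon,F)\leq N_\nu(\mathcal U,\omega,n,\delta)$, and since $\nu(F)>1-\delta$ the left side dominates $s_\nu(g_{\omega_n}\dots g_{\omega_1},n,\varepsilon,\delta)$; thus $s_\nu\leq N_\nu$ pointwise in $\omega$. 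Integrating and passing to $\limsup_n\frac1n\log\int(\cdot)\,d\mathbb P$ yields $h_\nu^K(\mathbb S,\mathbb P,\varepsilon,\delta)\leq\inf_{\diam(\mathcal U)\leq\varepsilon}h_\nu^S(\mathbb S,\mathcal U)$ for every $\nu$. Taking the supremum over $\nu$, dividing by $-\log\varepsilon$, applying $\limsup_{\varepsilon\to0^+}$ and then $\lim_{\delta\to0}$, the left-hand side becomes $\overline{\text{mdim}}_M(X,\mathbb S,d,\mathbb P)$ by Theorem \ref{thmB} (this is where the homogeneity of $\gamma$ enters), giving $\overline{\text{mdim}}_M(X,\mathbb S,d,\mathbb P)\leq\limsup_{\varepsilon\to0^+}\frac{\sup_{\nu\in\mathcal M(X)}\inf_{\diam(\mathcal U)\leq\varepsilon}h_\nu^S(\mathbb S,\mathcal U)}{-\log\varepsilon}$.

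The main obstacle is the last step: Theorem \ref{thmB} supplies a supremum over all $\nu\in\mathcal M(X)$, whereas the statement restricts to $\nu$ with $\Pi(\sigma,\nu)_{erg}\neq\emptyset$. I would close this gap by showing that measures outside the admissible class do not contribute, i.e. that $\sup_{\nu\in\mathcal M(X)}=\sup_{\{\nu:\Pi(\sigma,\nu)_{erg}\neq\emptyset\}}$ in the relevant quotient. Concretely, the empirical measures carried by the maximal separated sets that realize $A(\varepsilon)$ become, after averaging against $\mathbb P=\gamma^{\mathbb N}$ and lifting to the skew product $T_G$, asymptotically $T_G$-invariant; their weak-$\ast$ limits then lie in $\Pi(\sigma,\nu)_{erg}$ for their $X$-marginal $\nu$, so the extremizing measures are automatically admissible. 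This is precisely the point where the homogeneity of $\gamma$ and the $G$-homogeneous-measure machinery of Section \ref{section3}, already underpinning Theorems \ref{thmB} and \ref{prop-1}, must be used, and it is the step demanding the most care. Combining the two inequalities yields the asserted equality.
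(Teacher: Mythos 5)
Your first inequality (bounding $\sup_\nu\inf_{\diam(\mathcal U)\le\varepsilon}h^S_\nu$ above by the metric mean dimension via a single cover with controlled Lebesgue number) is correct, and it is actually more elementary than the paper's argument, which routes even this direction through the skew product $T_G$ and the box dimension of $\supp\gamma$. The genuine gap is in the reverse inequality. Your pointwise comparison $s_\nu(g_{\omega_n}\dots g_{\omega_1},n,\varepsilon,\delta)\le N_\nu(\mathcal U,\omega,n,\delta)$ for $\diam(\mathcal U)\le\varepsilon$ is fine, but combined with Theorem \ref{thmB} it only yields the bound with $\sup_{\nu\in\mathcal M(X)}$, whereas the statement requires the supremum over the a priori smaller class $\{\nu:\Pi(\sigma,\nu)_{erg}\ne\emptyset\}$. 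You correctly flag this as the crux, but the proposed repair --- that empirical measures carried by maximal separated sets lift to asymptotically $T_G$-invariant measures whose weak-$*$ limits have admissible $X$-marginals --- is only a sketch: no construction is given, no reason is offered why the limiting marginal $\nu$ would satisfy $\inf_{\diam(\mathcal U)\le\varepsilon}h^S_\nu(\mathbb S,\mathcal U)\ge h(X,\mathbb S,\mathbb P,\varepsilon')$ at a comparable scale (there is no upper semicontinuity of $\nu\mapsto h^S_\nu$ to fall back on), and the claim that ``the extremizing measures are automatically admissible'' is precisely what must be proved. This is where the paper does its real work: it fixes a maximal $\frac{\varepsilon}{4}$-separated set in $Z=\supp\gamma$, covers $Z^{\mathbb N}$ by cylinders $C_{\underline i}$, uses homogeneity of $\gamma$ to get $\mathbb P(C_{\underline i}\cap Z^{\mathbb N})\ge (L^2N_Z(\varepsilon))^{-(n+K)}$, bounds $\int N_\nu\,d\mathbb P$ from below by $N_\mu(T_G|_{Z^{\mathbb N}\times X},\mathcal V_0,n,\delta)$ for $\mu\in\Pi(\sigma,\nu)$, and then applies Shapira's variational principle for the single map $T_G$, namely $\sup_{\mu\in\mathcal E(T_G)}h^S_\mu(T_G,\mathcal V_0)=h_{top}(T_G,\mathcal V_0)$, together with the identity $\overline{\text{mdim}}_M(X,\mathbb S,d,\mathbb P)=\overline{\text{mdim}}_M(Z^{\mathbb N}\times X,T_G,D\times d)-\overline{\dim}_BZ$. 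The restricted class of measures appears automatically there because the competitors are exactly the $X$-marginals of ergodic $T_G$-invariant measures.

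A second, structural objection: in this paper the equality case of Theorem \ref{thmB} is itself deduced from the estimate \eqref{ineq:N-S}, which is established \emph{inside} the proof of Theorem \ref{thmA}. Making Theorem \ref{thmB} the engine of Theorem \ref{thmA}, as you do, is therefore circular within the paper's logical architecture unless you supply an independent proof of the equality in Theorem \ref{thmB}.
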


\subsection{Ghys-Langevan-Walczack entropy} 
 Ghys, Langevin and Walczak proposed in \cite{GLW} the following definition of topological entropy of  a semigroup action given by a  finitely generated . A subset $E$ of a compact metric space $(X,d_X)$ is \emph{$(n,\vep)$-separated points by elements of $\mathbb S$} if for any $x\neq y$ in $E$ there exists $0\leqslant j \leqslant n$ and $g\in G_j$ such that $d(g(x), \,g(y)) > \vep$. The topological entropy of the semigroup action $\mathbb S$, induced by a semigroup $G$ generated by a finite set $G_1$ of continuous maps, is given by
\begin{equation}\label{def:GLW}
h_{GLW}(\mathbb S) = \lim_{\vep\,\to\, 0^+}\,\limsup_{n\,\to\,+\infty}\, \frac1n \log s(n,\vep)
\end{equation}
where $s(n,\vep)$ is the largest cardinality of $(n,\vep)$-separated points by elements of $\mathbb S$. Observe that, since $X$ is compact, $s(n, \varepsilon)$ is finite for every $n \in \mathbb{N}$ and $\varepsilon > 0$. Moreover, the map
$$\varepsilon > 0 \quad \mapsto \quad h_{GLW}(\mathbb S,\varepsilon)=\limsup_{n\, \to\, +\infty}\,\frac{1}{n}\,\log \,s(n,\varepsilon)$$
is monotonic, so $h_{GLW}(\mathbb S)$ is well defined (though it depends on the set $G_1$ of generators). This is a purely topological notion, independent of any previously fixed random walk on the semigroup. Observe also that
$$\sup_{g \,\in \,G_1}\, \quad h_{\text{top}}(g) \,\,\leqslant \,\, h_{GLW}(\mathbb S)$$
but this inequality may be strict (cf. \cite{GLW}).
\subsubsection{Metric mean dimension in the GLW setting}
As a natural extension of the metric mean dimension for a single dynamics we can consider the \emph{ upper GLW-metric mean dimension}
as 
\begin{equation}\label{GLW-mdim}
    \displaystyle\overline{\text{mdim}}_M^{GLW} \,\Big(X,\mathbb S, d\Big)=\limsup_{\varepsilon\to0}\frac{h_{GLW}(\mathbb S,\varepsilon)}{-\log\varepsilon}.
\end{equation}

As a direct consequence of the above definition we have that for any $\mathbb P\in \mathcal M(Y)$,
$$
 \displaystyle\overline{\text{mdim}}_M \,\Big(X,\mathbb S,\mathbb P, d\Big)\leq \displaystyle\overline{\text{mdim}}_M^{GLW} \,\Big(X,\mathbb S, d\Big),
$$
and in the case where the generating setting consists of a single dynamics the two definitions coincide with the classical one.
\subsubsection{Local measure entropy and measure metric mean dimension}
For $n\in\mathbb N$ let
$$
B_n^G(x,\varepsilon)=\{y\in X: d(g(x),g(y))<\varepsilon \text{ for all }g\in G_j,\;0\leq j\leq n\}
$$
the dynamical ball of center $x$, radius $\varepsilon$ and depth $n$. For any $\nu\in\mathcal{M}(X)$ the quantity
$$
h_{\nu}^G(x)=\lim_{\varepsilon\to0}h_{\nu}^G(x,\varepsilon)
$$
where 
$$
h_{\nu}^G(x,\varepsilon)=\limsup_{n\to\infty}-\frac{1}{n}\log\nu(B_n^G(x,\varepsilon))
$$
is called the \emph{local upper $\nu$-measure entropy} at the point $x$. If one takes $\liminf$ with respect to $n$ in 
the above definition we the \emph{local lower $\nu$-measure entropy} at the point $x$, denoted by $h_{\nu,G}(x)$. 
These quantities were defined and explored in \cite{Bis}, where the author proved that in the case of $\nu$ being a 
$G$-homogeneous measure $h_{\nu}^G(x)=h_{GLW}(\mathbb S)$, for all $x\in X$ (see  Section \ref{section3} for the definition of $G$-homogeneous measure).

In order to have a concept related to the metric mean dimension we define the  \emph{local upper measure metric mean dimension} as 
\begin{equation}
    \displaystyle\overline{\text{mdim}}_\nu \,\Big(X,\mathbb S, d\Big)=\limsup_{\varepsilon\to0}\frac{h_{\nu}^G(x,\varepsilon)}{-\log\varepsilon}.
\end{equation}
If one takes $\liminf$ in $\varepsilon$ we have the \emph{lower  local upper  measure metric mean dimension}, denoted by $\displaystyle\underline{\text{mdim}}_\nu \,\Big(X,\mathbb S, d\Big)$.

If, instead of $h_{\nu}^G(x,\varepsilon)$ we consider $h_{\nu,G}(x,\varepsilon)$
we have the \emph{upper  local lower  measure metric mean dimension} and \emph{lower  local lower  measure metric mean dimension}, denoted by 
$\displaystyle\overline{\text{mdim}}'_\nu \,\Big(X,\mathbb S, d\Big)$ and $\displaystyle\underline{\text{mdim}}'_\nu \,\Big(X,\mathbb S, d\Big)$, respectively.
\begin{remark}
All the above definitions could be made in terms of dynamical balls. 
\end{remark}

In the case the where the ambient space $X$ is an oriented manifold it admits a volume form $dV$ which induces a
natural volume measure $\nu_v$ on the Borel sets defined as 
$$
\nu_v(A)=\int_A \; dV.
$$
The next gives  a kind of partial variational principle for the metric mean dimension of the group action in  terms of the volume
measure.
\begin{maintheorem}\label{thm8}
Let $(G,G_1)$ be a finitely generated group of homeomorphisms of a compact closed and oriented manifold $(M,d)$. Let $s\in(0,\infty)$
and $\nu_v$ the natural volume on $M$. If 
$$
\overline{\text{mdim}}_{\nu_v} \,(x, d)\leq s \text{ for all }x\in M \text{ then }\displaystyle\overline{\text{mdim}}_M^{GLW} \,\Big(X,\mathbb S, d\Big)\leq s.
$$
\end{maintheorem}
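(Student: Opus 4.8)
The plan is to reduce the statement to a \emph{uniform} exponential lower bound on the $\nu_v$-measure of the deep dynamical balls $B_n^G(x,\varepsilon)$ and then to run a volume-packing (disjointness) estimate. First I would fix $\eta>0$ and aim to prove that $h_{GLW}(\mathbb S,\varepsilon)\le (s+\eta)\,|\log(\varepsilon/2)|$ for all sufficiently small $\varepsilon>0$; dividing by $|\log\varepsilon|$, letting $\varepsilon\to0^+$ (so that $|\log(\varepsilon/2)|/|\log\varepsilon|\to 1$) and then $\eta\to0$ would give $\overline{\text{mdim}}_M^{GLW}(X,\mathbb S,d)\le s$.

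For the core packing estimate, let $E\subset M$ be a maximal $(n,\varepsilon)$-separated set by elements of $\mathbb S$, so that $|E|=s(n,\varepsilon)$. The $(n,\varepsilon)$-separation means that for distinct $x,y\in E$ there is some $g\in G_j$ with $0\le j\le n$ and $d(g(x),g(y))>\varepsilon$; hence the balls $\{B_n^G(x,\varepsilon/2)\}_{x\in E}$ are pairwise disjoint, since a common point $w$ would force $d(g(x),g(y))\le d(g(x),g(w))+d(g(w),g(y))<\varepsilon$ for every admissible $g$, a contradiction. Because $M$ is closed and oriented, the total volume $V:=\nu_v(M)$ is finite, so disjointness yields $s(n,\varepsilon)\cdot\inf_{x\in M}\nu_v\big(B_n^G(x,\varepsilon/2)\big)\le V$.

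The heart of the argument is to convert the pointwise hypothesis into a uniform lower bound $\nu_v(B_n^G(x,\varepsilon/2))\ge (\varepsilon/2)^{n(s+\eta)}$ valid for every $x\in M$ and every large $n$. Pointwise this is immediate: $\overline{\text{mdim}}_{\nu_v}(x,d)\le s$ yields, for $\varepsilon$ small, $\limsup_n -\tfrac1n\log\nu_v(B_n^G(x,\varepsilon/2))\le (s+\eta)|\log(\varepsilon/2)|$, i.e.\ the desired bound for all $n\ge N(x)$. To make $N(x)$ independent of $x$ I would exploit the lower semicontinuity of $x\mapsto\nu_v(B_n^G(x,\delta))$: since only the finitely many words $g\in G_j$, $0\le j\le n$, enter the definition of $B_n^G(x,\delta)$, for $x'$ close to $x$ one has $B_n^G(x,\delta-\rho)\subseteq B_n^G(x',\delta)$, whence $\liminf_{x'\to x}\nu_v(B_n^G(x',\delta))\ge\nu_v(B_n^G(x,\delta))$. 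Thus each $\phi_n:=-\tfrac1n\log\nu_v(B_n^G(\cdot,\varepsilon/2))$ is upper semicontinuous on the compact space $M$, and I would combine this with the compactness of $M$ and an exhaustion by the increasing measurable sets $\{x:\phi_m(x)\le(s+\eta)|\log(\varepsilon/2)|\ \forall m\ge N\}$ to extract a single $N$ that works uniformly.

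I expect this last uniformization --- upgrading the pointwise control of $\limsup_n\phi_n(x)$ to a uniform-in-$x$ control of $\sup_x\phi_n(x)$ --- to be the main obstacle, since the interchange of $\sup_x$ with $\limsup_n$ fails for general upper semicontinuous sequences; it is precisely here that the special features of the volume measure on the closed oriented manifold $M$ (finite total mass together with the regularity of $\nu_v$) must be used to rule out the escape of mass to a small but densely separated region. Granting the uniform bound, the packing inequality gives $s(n,\varepsilon)\le V\,(\varepsilon/2)^{-n(s+\eta)}$ for all large $n$, so that $h_{GLW}(\mathbb S,\varepsilon)=\limsup_n\tfrac1n\log s(n,\varepsilon)\le (s+\eta)|\log(\varepsilon/2)|$, and the reduction in the first step completes the proof.
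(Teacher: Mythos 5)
Your overall strategy coincides with the paper's: both arguments reduce the theorem to a \emph{uniform} exponential lower bound $\nu_v\big(B_n^G(x,\cdot)\big)\geqslant \varepsilon^{n(s+\eta)}$ valid for every $x$ and every large $n$, and then run a volume-packing estimate against the finite total mass of $\nu_v$. Your packing step is correct and is only a cosmetic variant of the paper's: you take a maximal $(n,\varepsilon)$-separated set and observe that the half-radius dynamical balls are disjoint (the triangle inequality applied to every word $g\in G_j$, $j\leqslant n$, is exactly right), whereas the paper applies the Vitali $5r$-covering lemma in the dynamical metric to extract a disjoint subfamily whose $6\varepsilon$-enlargements span, thereby bounding the entropy at scale $6\varepsilon$. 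The opening reduction and the final limit bookkeeping in $\varepsilon$ and $\eta$ are also fine.

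The genuine gap is the one you flag yourself and then do not close: you never produce the uniform $N$. The hypothesis yields, for each $x$, an $N(x)$ beyond which $-\tfrac1n\log\nu_v(B_n^G(x,\varepsilon/2))\leqslant (s+\eta)|\log(\varepsilon/2)|$, but upper semicontinuity of each $\phi_n$ together with compactness of $M$ does not allow the interchange of $\sup_x$ with $\limsup_n$; the closed sets $\{x:\phi_m(x)\leqslant c\ \forall m\geqslant N\}$ increase to $M$ without any reason to stabilize, so as written your estimate only controls $s(n,\varepsilon)$ restricted to each piece of the exhaustion, not on all of $M$. The paper confronts the same point: it writes $X=\bigcup_k X_k$ with $X_k$ the set of points where the scale-$\varepsilon$ bound holds for all $\varepsilon<1/k$, asserts that $N_0=\sup_{x\in X_k}n(x)<\infty$ by invoking continuity of $x\mapsto\nu_v(B_m^G(x,\varepsilon))$ on a manifold of bounded geometry, and then runs the Vitali argument on $\overline{X_k}$. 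To finish your proof you would have to supply that uniformization honestly (for instance by exploiting the monotonicity $B_{n+1}^G(x,\delta)\subseteq B_n^G(x,\delta)$, so that $-\log\nu_v(B_n^G(\cdot,\delta))$ is nondecreasing in $n$, in a Dini/Egorov-type argument on $\overline{X_k}$), and then additionally explain why the scale-$\varepsilon$ GLW entropy of $M$ is controlled by that of the pieces $X_k$ --- a step that is not automatic for $\limsup$-type quantities over a countable increasing union. In short: the skeleton is the paper's, but the load-bearing step is declared rather than proved.
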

Our last theorem shows that, in the case where the group action admits a strongly  $G$-homogeneous measure $\nu$  we have an equality between the local measure   metric measure mean dimension of $\nu$ and the metric mean dimension of the group action (see Section \ref{section3} for the definition of strongly  $G$-homogeneous measure).
\begin{maintheorem}\label{thm7}
Let $(X,d)$ be a compact metric space and $\mathbb{S}$ be the semigroup action induced on $(X,d)$ by a finite  family of continuous maps $(g_i \colon X \to X)_{i=1}^p$. \\
(a) If  $\nu\in\mathcal M(X)$ is strongly $G$-homogeneous then
$$\displaystyle\overline{\text{mdim}}_M^{GLW} \,\Big(X,\mathbb S, d\Big)=\limsup_{\varepsilon\to0}\frac{h_{\nu}^G(x,\varepsilon
)}{-\log\varepsilon}.
$$
(b) Let $\nu$ be a Borel measure on $X$ and  $s\in(0,\infty)$. If 
$$\inf_{x\in X}\displaystyle\overline{\text{mdim}}'_\nu \,(x, d)\geq s \text{\;\; then \;\;}\displaystyle\overline{\text{mdim}}_M^{GLW} \,\Big(X,\mathbb S, d\Big)\geq s.$$
\end{maintheorem}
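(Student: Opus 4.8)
Both parts of Theorem~\ref{thm7} rest on a single ``counting versus measure'' comparison between the separated number $s(n,\varepsilon)$ (and the spanning number $r(n,\varepsilon)$, the minimal cardinality of an $(n,\varepsilon)$-spanning set in the GLW sense, for which $s(n,2\varepsilon)\le r(n,\varepsilon)\le s(n,\varepsilon)$) and the measures $\nu(B^G_n(x,\varepsilon))$ of the dynamical balls. A preliminary reduction I would record once is that, since $\varepsilon\mapsto h_\nu^G(x,\varepsilon)$, $\varepsilon\mapsto h_{\nu,G}(x,\varepsilon)$ and $\varepsilon\mapsto h_{GLW}(\mathbb{S},\varepsilon)$ are all monotone, each $\limsup_{\varepsilon\to0^+}$ defining a metric mean dimension may be evaluated along the geometric sequence $\varepsilon_k=e^{-k}$, because $-\log\varepsilon_{k+1}/(-\log\varepsilon_k)\to1$. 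For part (a) the target is the scale-by-scale sandwich $h_\nu^G(x,\varepsilon)\le h_{GLW}(\mathbb{S},\varepsilon)\le h_\nu^G(x,\varepsilon/2)$: dividing by $-\log\varepsilon$, taking $\limsup_{\varepsilon\to0^+}$ and using $-\log(\varepsilon/2)/(-\log\varepsilon)\to1$ then gives the asserted equality, the base point $x$ being irrelevant because strong $G$-homogeneity renders $\nu(B^G_n(\cdot,\varepsilon))$ uniformly comparable.

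For the left inequality of the sandwich I would take a minimal $(n,\varepsilon)$-spanning set $R$; its balls $\{B^G_n(y,\varepsilon)\}_{y\in R}$ cover $X$, so $1\le\sum_{y\in R}\nu(B^G_n(y,\varepsilon))\le r(n,\varepsilon)\max_y\nu(B^G_n(y,\varepsilon))$, and the upper comparison of strong $G$-homogeneity bounds $\max_y\nu(B^G_n(y,\varepsilon))\le C\,\nu(B^G_n(x,\varepsilon))$, whence $\nu(B^G_n(x,\varepsilon))\ge(C\,r(n,\varepsilon))^{-1}$; applying $-\tfrac1n\log(\cdot)$, taking $\limsup_n$ and using $r(n,\varepsilon)\le s(n,\varepsilon)$ yields $h_\nu^G(x,\varepsilon)\le h_{GLW}(\mathbb{S},\varepsilon)$. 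For the right inequality I would take a maximal $(n,\varepsilon)$-separated set $E$; the balls $\{B^G_n(y,\varepsilon/2)\}_{y\in E}$ are pairwise disjoint, so $\sum_{y\in E}\nu(B^G_n(y,\varepsilon/2))\le1$, and the lower comparison $\nu(B^G_n(y,\varepsilon/2))\ge C^{-1}\nu(B^G_n(x,\varepsilon/2))$ gives $s(n,\varepsilon)\le C\,\nu(B^G_n(x,\varepsilon/2))^{-1}$, hence $h_{GLW}(\mathbb{S},\varepsilon)\le h_\nu^G(x,\varepsilon/2)$. In both chains the additive terms $\tfrac1n\log C$ vanish in the $\limsup_n$.

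Part (b) permits no homogeneity, so only the lower bound survives and the argument becomes measure-theoretic; note that it is the \emph{lower} local entropy $h_{\nu,G}$ (a $\liminf_n$) that appears, which is exactly what makes the ball estimates hold for all large $n$. The heart is a covering lemma: if $\nu(A)>0$ and $h_{\nu,G}(x,\varepsilon)\ge\alpha$ for every $x\in A$, then $h_{GLW}(\mathbb{S},\varepsilon/2)\ge\alpha$. To prove it, fix $\eta>0$ and set $A_N=\{x\in A:\nu(B^G_n(x,\varepsilon))\le e^{-n(\alpha-\eta)}\ \text{for all}\ n\ge N\}$; by definition of the $\liminf$ these increase to $A$, so $\nu(A_N)>0$ for some $N$. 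Covering $X$ by a minimal $(n,\varepsilon/2)$-spanning family and, for each of its balls meeting $A_N$, enlarging it to a radius-$\varepsilon$ ball centred at a point of $A_N$ (via the triangle inequality for the dynamical max-metric, so that $B^G_n(y,\varepsilon/2)\subseteq B^G_n(x_y,\varepsilon)$), one obtains $\nu(A_N)\le r(n,\varepsilon/2)\,e^{-n(\alpha-\eta)}$ for $n\ge N$, hence $h_{GLW}(\mathbb{S},\varepsilon/2)\ge\limsup_n\tfrac1n\log r(n,\varepsilon/2)\ge\alpha-\eta$; letting $\eta\to0$ proves the lemma. To finish, fix $s'<s$ and work along $\varepsilon_k=e^{-k}$: the hypothesis $\inf_{x}\overline{\text{mdim}}'_\nu(x,d)\ge s$ together with monotonicity places every $x$ in $E_k:=\{x:h_{\nu,G}(x,\varepsilon_k)>s'(-\log\varepsilon_k)\}$ for infinitely many $k$, so $X=\limsup_k E_k$ and $\nu(\bigcup_{k\ge m}E_k)=1$ for all $m$, forcing $\nu(E_k)>0$ for infinitely many $k$. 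Applying the lemma with $A=E_k$ and $\alpha=s'(-\log\varepsilon_k)$ gives $h_{GLW}(\mathbb{S},\varepsilon_k/2)\ge s'(-\log\varepsilon_k)$, so $h_{GLW}(\mathbb{S},\varepsilon_k/2)/(-\log(\varepsilon_k/2))\ge s'(-\log\varepsilon_k)/(-\log\varepsilon_k+\log2)\to s'$; as $s'<s$ was arbitrary, $\overline{\text{mdim}}_M^{GLW}(X,\mathbb{S},d)\ge s$.

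The main obstacle throughout is the passage from pointwise to uniform information. In (b) the scale at which a given $x$ achieves its local lower metric mean dimension depends on $x$, and even at a fixed scale the bound $\nu(B^G_n(x,\varepsilon))\le e^{-n(\alpha-\eta)}$ is only eventual in $n$ with an $x$-dependent threshold; the two extraction steps above---the increasing sets $A_N$ and the full-measure set $\limsup_k E_k$---are precisely what convert this into usable uniform estimates and constitute the delicate part. Secondary, routine points are the Borel measurability of $x\mapsto\nu(B^G_n(x,\varepsilon))$ (and hence of $A_N$ and $E_k$), which follows from lower semicontinuity of the ball measures, and, in (a), the verification that the radius change $\varepsilon\mapsto\varepsilon/2$ forced by the homogeneity comparison is harmless---guaranteed by $-\log(\varepsilon/2)/(-\log\varepsilon)\to1$.
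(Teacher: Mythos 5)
Your proposal is correct and follows essentially the same route as the paper: for (a) the two-sided comparison of $s(n,\varepsilon)$ and $r(n,\varepsilon)$ with $\nu(B_n^G(x,\cdot))$ via disjointness of half-radius balls around separated points and covering by spanning balls, with (strong) $G$-homogeneity making the ball measures comparable across centres and the resulting radius changes washed out after dividing by $-\log\varepsilon$; for (b) the same decomposition of $X$ into sets where the ball-measure decay is uniform in $n$, extraction of one of positive measure, and a covering estimate against the spanning number. If anything, your bookkeeping of the $\limsup_\varepsilon$ hypothesis (via $X=\limsup_k E_k$) and of the $\liminf_n$ in $h_{\nu,G}$ is more careful than the paper's, which states the decomposition $X=\bigcup_k X_k$ with a ``for all $\varepsilon\in(0,1/k)$'' condition and writes a $\limsup_n$ where the subsequent step $X_{k_0}=\bigcup_N X_{k_0,N}$ requires the eventual (liminf-type) bound.
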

 \section{Some facts about homogeneous measures and  $G$-homogeneous measures }\label{section3}
In order to obtain a text as self-contained as possible, in this section we recall the definitions of upper box dimension, homogeneous measure and $G$-homogeneous measure.
\subsection{Upper box dimension}

Let $(Y,d_Y)$ be a compact metric space.

\begin{definition}\label{def.ubd-Y}
The \emph{upper box dimension} of $(Y,d_Y)$ is given by
\begin{equation}\label{eq:boxd}
\overline{\text{dim}}_B Y=\limsup_{\varepsilon\,\to\,0^+}\,\frac{\log N(\varepsilon)}{|\log\varepsilon|},
\end{equation}
where $N(\varepsilon)$ stands for the maximal cardinality of an $\varepsilon$--separated set in $(Y,d_Y)$.
\end{definition}


Consider now a Borel probability measure $\nu$ on $Y$.
\begin{definition}\label{def.ubd-niu}
The \emph{upper box dimension} of $\nu$ is given by
$$\overline{\dim}_B \, \nu = \lim_{\delta \,\to \,0^+} \,\inf \,\Big\{\overline{\dim}_B \,Z \colon \,\, Z\subset Y \quad \text{and} \quad \nu(Z) \geqslant 1-\delta\Big\}.$$
\end{definition}

It is worth mentioning that, although the upper box dimension of a set $Z$ coincides with the upper box dimension of its closure, the upper box dimension of a probability measure is intended to estimate the size of subsets rather than the entire support of the measure (that is, the smallest closed subset with full measure). Indeed, it may happen that $\overline{\dim}_B \, \nu < \overline{\dim}_B \, (\supp \,\nu)$ (cf. Example 7.1 in \cite{Pesin}). We refer the reader to \cite{F90, Pesin} for excellent accounts on dimension theory.

\subsection{Homogeneous measures} Let $\nu$ be a Borel probability measure on the compact metric space $(Y,d_Y)$. A
balanced measure should give the same probability to any two balls with the same radius, but this is in general a too strong demanding. Instead, we weaken the request in the following way.

\begin{definition}\label{def.hom}
We say that $\nu$ is \emph{homogeneous} if there exists $L>0$ such that
\begin{equation}\label{def:homogeneous}
\nu\big(B(y_1,2\vep)\big) \leqslant L \, \nu\big(B(y_2,\vep)\big)\quad\quad \forall\, y_1, \,y_2 \,\in \,\supp \,\nu \quad \forall\, \vep>0.
\end{equation}
\end{definition}

For instance, the Lebesgue measure on $[0,1]$, atomic measures and probability measures absolutely continuous with respect to the latter ones, with densities bounded away from zero and infinity, are examples of homogeneous probability measures. We denote by $\mathcal{H}_Y$ the set of such homogeneous Borel probability measures on $Y$.

By definition, every homogeneous measure satisfies
\begin{equation}\label{def:diamreg}
\nu\big(B(y, 2 \vep)\big) \leqslant L \, \nu\big(B(y,\vep)\big)\quad\quad \forall\, y \,\in \,\supp \,\nu \quad \forall\, \vep>0
\end{equation}
and, as $\nu\big(B(y_1,\vep)\big) \leqslant \nu\big(B(y_1,2 \vep)\big)$,
\begin{equation}\label{def:homogeneous0}
\nu\big(B(y_1,\vep)\big) \leqslant L \, \nu\big(B(y_2,\vep)\big)\quad\quad \forall\, y_1, \,y_2 \,\in \,\supp \,\nu \quad \forall\, \vep>0.
\end{equation}
A measure $\nu$ satisfying \eqref{def:diamreg} is said to be a \emph{doubling measure}.
Although the two concepts \eqref{def:diamreg} and \eqref{def:homogeneous0} are unrelated in general, if $Y$ is a subset of an Euclidean space $\mathbb R^k$ then any probability $\nu$ satisfying \eqref{def:homogeneous0} is a doubling measure. Indeed, as there is a constant $C_k$ such that $Leb(B(y, r)) = C_k \,r^k$ for every $y \in Y$ and every $r>0$, any ball $B(y, 2 \vep)$ can be covered by at most $2^k$
balls of radius $\vep$; we now apply \eqref{def:homogeneous}. For a discussion on conditions on $Y$ which ensure the existence of homogeneous measures and further relations between homogeneity and the doubling property we refer the reader to \cite[Section~4]{Bis} and references therein.

\subsection{$G$-homogeneous measures} 
 For a compactly generated semigroup by a continuous family $(g_y \colon X \to X)_{y\in Y}$ acting on  a metric space, we say that a Borel measure  $\nu\in\mathcal M(X)$ is \emph{
$G$-homogeneous} if 
\begin{enumerate}
    \item[(a)] $\nu(K)<\infty$, for any compact set $K\subset X$;
    \item[(b)] there exists $K_0\subset X$ such that $\nu(K_0)>0$;
    \item[(c)] for any $\varepsilon>0$ there exist $\delta(\varepsilon)>0$ and $c>0$ such that
    $$
    \nu(B_n^G(x,\delta(\varepsilon)))\leq c\cdot\nu(B_n^G(y,\varepsilon))
    $$
    holds for any $n\in\mathbb N$ and all $x,y\in X$.
In the case where $\delta(\varepsilon)=O(\varepsilon)$ we say that $\nu$ is \emph{strongly $G$-homogeneous}.
\end{enumerate}

As examples of spaces which admit a strongly $G$-homogeneous measure we have the following:

\noindent \textbf{1}. The canonical volume form $dV$ on a closed, compact and oriented Riemannian manifold $X$ determines a  
strongly $G$-homogeneous measure $\nu$ if $G$ is a finitely generated group of isometries.

\noindent \textbf{2}. If $X$ is a locally compact topological group, $\mu$ is a right invariant measure and $G$ is a finitely generated group by $G_1=\{id_X,T_1,T_1^{-1},T_2,T_2^{-1},\dots,T_p,T_p^{-1}\}$, a finite and symmetric set  of homeomorphisms, then $\mu$ is strongly $G$-homogeneous (see \cite[Proposition 4.6]{Bis}).

\section{Proofs}\label{section4}
In this section we prove our main results.

\subsection{Proof of Theorem \ref{thm6}}
It is clear from the definition of the entropy function that
$h_d(x,\varepsilon)\leq h(X,\mathbb S,\mathbb P,\varepsilon)$, for all $x\in X$, and it implies that 
$$
\displaystyle\overline{\text{mdim}}_M \,\Big(X,\mathbb S, d, \mathbb{P}\Big)\geq \limsup_{\varepsilon\,\to\,0^+}\,\frac{\displaystyle\sup_{x\in X}h_d(x,\varepsilon)}{-\log\varepsilon}.
$$
To prove the converse inequality we start noticing that, for a fixed $\varepsilon>0$, if $X=\cup_{i=1}^kF_i$, finite union of closed sets, then $B(X,\mathbb S,\varepsilon,\mathbb P)\leq \max_{i}B(F_i,\mathbb S,\varepsilon,\mathbb P)$. Then cover $X$ by closed balls of radius 1, say
$\mathcal B_1=\{B_1^1,\dots,B_{\ell_1}^1\}$ such cover. Let $B_{j_1}^1$ be the closed ball in the given cover where the maximum occurs. Now cover 
$B_{j_1}^1$ by a finite family of  closed balls of radius at most $\frac{1}{2}$ denoted by $\{B_1^2,\dots,B_{\ell_2}^2\}$. Again
there exists $B_{j_2}^2\in \mathcal B_2$ for which $B(X,\mathbb S,\varepsilon,\mathbb P)\leq B(B_{j_2}^2,\mathbb S,\varepsilon,\mathbb P)$. Follwoing by induction, for each $k\in\mathbb N$, there exists a closed ball of radius at most $\frac{1}{k}$ so that $B(X,\mathbb S,\varepsilon,\mathbb P)\leq B(B_{j_k}^k,\mathbb S,\varepsilon,\mathbb P)$. Moreover, by the previous construction we have a sequence of nested closed balls $\{B_{j_k}^k\}_{k\in\mathbb N}$ whose diameter goes to zero. So, there exists $\bar x= \cap_{k\in\mathbb N} B_{j_k}^k$ and for
any closed neighbourhood $F$ of $\bar x$ we have $B_{j_k}^x\subset F$, for $k\in\mathbb N$ large enough. It gives 
$$
h_d(\bar x,\varepsilon)\geq B(F,\mathbb S,\varepsilon,\mathbb P)\geq B(B_{j_k}^k,\mathbb S,\varepsilon,\mathbb P)\geq B(X,\mathbb S,\varepsilon,\mathbb P).
$$
Hence 
\begin{align*}
    \limsup_{\varepsilon\,\to\,0^+}\,\frac{\displaystyle\sup_{x\in X}h_d(x,\varepsilon)}{-\log\varepsilon}\geq \displaystyle\overline{\text{mdim}}_M \,\Big(X,\mathbb S, d, \mathbb{P}\Big)
\end{align*}
and it finishes the proof.




\subsection{Proof of Theorem \ref{thmB}}
First we notice that for any $\nu\in\mathcal{M}(X)$ and $\delta>0$, $\nu(X)>1-\delta$ and so, for every $\varepsilon>0$, $n\in\mathbb N$ and $\omega\in Y^\mathbb N$
$$
s(g_{\omega_n}\dots g_{\omega_1},n,\varepsilon)\geq s_\nu(g_{\omega_n}\dots g_{\omega_1},n,\varepsilon,\delta).
$$
It implies that, for  any $\nu\in\mathcal{M}(X)$
$$
h(X,\mathbb S,\mathbb P,\varepsilon)\geq h^K_\nu(X,\mathbb S,\mathbb P,\varepsilon,\delta).
$$
Hence,

\begin{align}\label{eq:20}
\displaystyle\overline{\text{mdim}}_M \,\Big(X,\mathbb S, d, \mathbb{P}\Big)\geq \lim_{\delta\to0}\limsup_{\varepsilon\,\to\,0^+}\,\frac{\displaystyle\sup_{\nu\in\mathcal M(X)}h_\nu^K(\mathbb S,\mathbb P,\varepsilon,\delta)}{-\log\varepsilon}.
\end{align}
If $\mathbb P=\gamma^\mathbb N$ with  $\gamma\in \mathcal{H}_Y$, by \eqref{ineq:N-S} we know that for $\nu\in\mathcal{M}(X)$ and 
$\mu\in\Pi(\sigma,\nu)\not=\emptyset$,

$$
h_\nu^K(X,\mathbb S,\mathbb P,\varepsilon,\delta)\geq \displaystyle\sup_{\mu\in\Pi(\sigma,\nu)} h_\mu^K(Y^\mathbb N\times X,T_G,\varepsilon,\delta)-\log N_Z(\varepsilon).
$$
It follows that
\begin{align*}
    \lim_{\delta\to0}\limsup_{\varepsilon\,\to\,0^+}\,\frac{\displaystyle\sup_{\nu\in\mathcal M(X)}h_\nu^K(\mathbb S,\mathbb P,\varepsilon,\delta)}{-\log\varepsilon}
    &\geq \lim_{\delta\to0}\limsup_{\varepsilon\,\to\,0^+}\,\frac{\displaystyle\sup_{\mu\in\mathcal M_{T_G}(Y^\mathbb N\times X)}h_\nu^K(T_G,\varepsilon,\delta)}{-\log\varepsilon}-\displaystyle\overline{\text{dim}}_B(\text{supp}(\gamma))\\
    &=\displaystyle\overline{\text{mdim}}_M \,\Big(Y^\mathbb N\times X,T_G, D\times d\Big)-\displaystyle\overline{\text{dim}}_B(\text{supp}(\gamma))\\
    &=\displaystyle\overline{\text{mdim}}_M \,\Big(X,\mathbb S, d, \mathbb{P}\Big).
\end{align*}
By \eqref{eq:20} we have the desired equality  and conclude the proof.

\subsection{Proof of Theorem \ref{prop-1}} Take $i_0,\dots,i_{n-1}\in \{1,\dots,p\}$, $U_{j_0},\dots,U_{j_{n-1}}\in \mathcal U$ and consider
 \begin{align*}
     &\left([i_0]\times U_{j_0}\right) \cap\left( T_G^{-1}([i_1]\times U_{j_1}\right)\cap\dots\cap T_G^{-1}\left([i_{n-1}]\times U_{j_{n-1}}\right)\\
     &=[i_0\dots i_{n-1}]\times \left( U_{j_0}\cap \dots \cap (f_\omega^{n-1})^{-1}(U_{j_{n-1}})\right),
 \end{align*}
 where $\omega $ belongs to the cylinder set $[i_0\dots i_{n-1}]$. If we denote by $\mathcal U(\omega,n)=\{V_{j_0}\cap \dots \cap (f_\omega^{n-1})^{-1}(V_{j_{n-1}}): V_{j_\ell}\in\mathcal U\}$ the open cover of $X$ induced by $\omega$, we have that
 $N(\mathcal U,\omega,n)$ coincides with the minimum  number of open sets of $\tilde{\mathcal U}^{(n)}$ necessary to cover $[i_0\dots i_{n-1}]\times X$. So,
 \begin{align*}
    h_{top}(\mathcal U, \mathbb S,\eta_{\underline p})+\log p&=
    \lim_{n\to\infty} \frac{1}{n}\log\left(\frac{1}{p^n}\sum_{\underline g\in G_n}N(\mathcal U, \underline g,n)\right)+\log p\\
    &= \lim_{n\to\infty}  \frac{1}{n}\log N(\Tilde{\mathcal U},T_G,n)\\
    &=h_{top}(\Tilde{\mathcal U},T_G).
 \end{align*}
It proves item (i).

For the second item take $\delta\in (0,1)$ and  $\nu\in\mathcal M(X)$ so that $\Pi(\sigma,\nu)_{erg}\not=\emptyset$. For $\mu\in \Pi(\sigma,\nu)_{erg}$ we have that
\begin{align*}
    \sum_{\underline g\in G_n}N_{\nu}(\mathcal U,\underline g,n,\delta)= N_\mu(\mathcal U,T_G,n,\delta).
\end{align*}
The equality comes from the fact that if 
\begin{align*}
    \sum_{\underline g\in G_n}N_{\nu}(\mathcal U, \underline g,n,\delta)> N_\mu(\mathcal U,T_G,n,\delta),
\end{align*}
there exists a cylinder $[i_0\dots i_{n-1}]$ so that $[i_0\dots i_{n-1}]\times X$ is covered by at most $N_{\nu}(\mathcal U,\underline g,n,\delta)-1$
open sets, where $w=i_0\dots i_{n-1}$. As $(\pi_X)_*(\mu)=\nu$, it contradicts the minimality of $N_{\nu}(\mathcal U,\underline g,n,\delta)$.
So, 
\begin{align*}
  &\displaystyle\sup_{ \left\{ \nu\in \mathcal M(X)\text{ and }\Pi(\sigma,\nu)_{erg}\not=\emptyset\right\}} h^S_{\nu}( \mathcal U,\mathbb S)\\
  &=\displaystyle\sup_{ \left\{ \nu\in \mathcal M(X)\text{ and }\Pi(\sigma,\nu)_{erg}\not=\emptyset\right\}} \lim_{n\to\infty} \frac{1}{n}\log\left(\frac{1}{p^n}\sum_{\underline g\in G_n}N_\nu(\mathcal U, \underline g,n)\right)\\
  &=\sup_{\mu\in \mathcal E_{T_G}(\Sigma_p^+\times X)} \lim_{n\to\infty}  \frac{1}{n}\log N_\mu(\Tilde{\mathcal U},T_G,n)-\log p\\
  &=h_{top}(\Tilde{\mathcal U},T_G)-\log p\\
  &=h_{top}(\mathcal U,\mathbb S,\eta_{\underline p}),
 \end{align*}
which concludes the proof of the second item.

\subsection{Proof of Theorem \ref{thmA}}
Before we start the proof we observe that Definition \ref{def:metric-entropy-5} could be made in terms of spanning sets. More precisely, given $\varepsilon>0$, a positive integer $n$ and $\underline{g}=g_{\omega_n}\dots g_{\omega_1}$, we say that a subset $A$ of $E\subset X $ is a  $(g_{\omega_n}\dots g_{\omega_1},n,\varepsilon,E)-$spanning set if
  for any $x\in E$ there exists $y\in A$ so that $D_{\underline g}(x,y)<\varepsilon$. By the compactness of $X$, given $\varepsilon$, $n$ and $\underline{g}$ as before, there exists a finite $(\underline g,n,\varepsilon,E)-$spanning set.

  We denote by $b(g_{\omega_n}\dots g_{\omega_1},n, \varepsilon, E)$ the minimum cardinality of a $(g_{\omega_n}\dots g_{\omega_1},n,\varepsilon,E)$-spanning. For $\delta>0$ we set
  $$b_\nu(g_{\omega_n} \dots g_{\omega_1}, n, \varepsilon,\delta) = \,\inf_{\{E \,\subseteq \,X\,\colon\, \nu(E) \,> \,1-\delta\}}\,\, b(g_{\omega_n} \dots g_{\omega_1}, n, \varepsilon,E).$$
  It is not difficult to see that

\begin{equation}
\nonumber h^{K}_{\nu}(\mathbb{S}, \mathbb P) = \lim_{\delta \to 0} \,\,\lim_{\varepsilon \to 0}\,\, \limsup_{n\to \infty} \,\,\frac1n \log \int_{\Sigma_p^+}\, b_\nu(g_{\omega_n} \dots g_{\omega_1}, n, \varepsilon, \delta)\; d\mathbb P(\omega).
\end{equation}

Let us proceed to the proof of the theorem.
Fix $\varepsilon>0$ and consider a positive integer $k=k(\varepsilon)\geq 1$ so that 
$\sum_{i\geq k}\frac{diam(Y)}{2^i}<\frac{\varepsilon}{2}$. For $\gamma\in \mathcal H_Y$, take $Z=\text{supp}(\gamma)$ and choose a maximal $\frac{\varepsilon}{4}$-separated set $E\subset Z$, whose cardinality is denoted by $N_Z(\varepsilon
)$. By the definition of upper box dimension, 
$$
\limsup_{\varepsilon\to0}\frac{N_Z(\varepsilon)}{-\log\varepsilon}=\overline{dim}_B(Z).
$$
For each $n\in\mathbb N$ and  each point $(p_1,\dots,p_{n+k})\in E^{n+k}$, consider the cylinder 
$$
C_{i_1\dots i_{n+k}}=\left\{\omega\in Y^{\mathbb N}:\omega_i\in B\left(p_i,\frac{\varepsilon}{4}\right), \text{ for }i=1,\dots,n+k\right\}.
$$
Note that the collection of cylinders defined above covers $Z^\mathbb{N}$ and has diameter less than $\varepsilon$.

Now, for the fixed $\varepsilon$ let $\mathcal{U}_0$ be an open cover of $X$ with $\text{diam}(\mathcal{U}_0)\leq \varepsilon$ and  $Leb(\mathcal U_0)\geq \varepsilon$.
If  $\mathcal U$ is an open cover of $X$ with diameter less or equal to $\displaystyle\frac{\varepsilon}{8}$, $\omega\in Y^\mathbb N$, as $Leb(\mathcal U_0)\geq \text{diam}(\mathcal U)$, $\mathcal U(\omega,n)$  refines $\mathcal U_0(\omega,n)$. It implies that, for $\delta\in(0,1)$, $N_\nu(\mathbb S, \mathcal U, g_{\omega_n}\dots g_{\omega_1}, n,\delta)\geq N_\nu(\mathbb S, \mathcal U_0, g_{\omega_n}\dots g_{\omega_1}, n,\delta)$. Thus, once 
$$
N_\nu(\mathbb S,\mathcal U, g_{\omega_n}\dots g_{\omega_1},n,\delta)\geq s_\nu(\mathbb S, g_{\omega_n}\dots g_{\omega_1}, n,\varepsilon,\delta)\geq b_\nu(\mathbb S, g_{\omega_n}\dots g_{\omega_1}, n,\varepsilon,\delta), 
$$
$\text{for all }\omega\in Y^\mathbb N \text{ and } n\in \mathbb N$,
we have that

\begin{align}\label{eq:ub-1B}
 &   \int_{Y^{\mathbb{N}}}N_\nu(\mathbb S,\mathcal U,g_{\omega_n}\dots g_{\omega_1},n,\delta)\ d\mathbb P(\omega)
 \nonumber   \geq \int_{Y^{\mathbb{N}}}N_\nu(\mathbb S, \mathcal U_0,g_{\omega_n}\dots g_{\omega_1}, n,\delta)\ d\mathbb P\\
  \nonumber  &\geq \int_{Y^{\mathbb{N}}}s_\nu(\mathbb S, g_{\omega_n}\dots g_{\omega_1}, n,\varepsilon,\delta)\ d\mathbb P\\
     &\geq \int_{Y^{\mathbb{N}}}b_\nu(\mathbb S, g_{\omega_n}\dots g_{\omega_1}, n,\varepsilon,\delta)\ d\mathbb P\\
    \nonumber &\geq\sum_{\underline i=(i_1\dots i_{n+k})}\min_{\omega\in C_{\underline i}\cap Z^{\mathbb{N}}} b_\nu(\mathbb S, g_{\omega_n}\dots g_{\omega_1}, n,\varepsilon,\delta)\times\min_{\underline i}\mathbb P(C_{\underline i}\cap Z).
\end{align}
Now we notice that the image of $b_\nu(\mathbb S, \cdot, n,\varepsilon,\delta):C_{\underline i}\to \mathbb Z_+ $ has a minimum in $\mathbb Z_+$ and such minimum is attained by some $\omega^{(\underline i)}\in C_{\underline i}$. So,
 This together with \eqref{eq:ub-1B}, the fact that $\mathbb P$ is a product measure
and the homogeneity assumption on $\gamma$ imply that
\begin{align}\label{ineq:N-S}
&\int_{Y^{\mathbb{N}}}N_\nu(\mathbb S,\mathcal U,g_{\omega_n}\dots g_{\omega_1},n,\delta)\ d\mathbb P(\omega)\\ \nonumber
         &\geq\int_{Y^{\mathbb N}}\,b_\nu(g_{\omega_n}\dots g_{\omega_1},n,\varepsilon,\delta)\,d\mathbb P(\omega)\\
\nonumber& \geqslant \, \Big[\sum_{\underline i\,=\,(i_1,i_2,\dots,i_{n+K})}
		\min_{\omega \,\in \,C_{\underline i} \cap Z^\N} \,b_\nu(g_{\omega_n}\dots g_{\omega_1},n,\varepsilon,\delta)\Big]
		\times \min_{\underline i} \mathbb P(C_{\underline i} \cap Z^\N) \\
\nonumber& \geqslant \,\sum_{\underline i}\, b_\nu(g_{\omega^{(\underline i)}},n,\varepsilon,\delta)
            	\, \times \, \min_{\underline i}  \prod_{j=0}^{n+K-1} \gamma\Big(B(p_{i_j},\frac{\vep}{4}) \cap Z\Big)
 \\
\nonumber&\geqslant \, b_\mu(T_G\mid_{Z^\N \times X},n,\varepsilon,\delta) \; \left(\frac{1}{L^2}\right)^{n+K} \left(\frac1{N_Z(\vep)}\right)^{n+K}\\
\nonumber&\geqslant \, N_\mu(T_G\mid_{Z^\N \times X},\mathcal V_0,n,\delta) \; \left(\frac{1}{L^2}\right)^{n+K}\left(\frac1{N_Z(\vep)}\right)^{n+K}
\end{align}
where by  $g_{\omega^{(\underline i)}}$ we mean $g_{\omega^{(\underline i)}_n}\dots g_{\omega^{(\underline i)}_1}$ if $\omega^{(\underline i)}|_{[1,n]}=\omega^{(\underline i)}_1\dots \omega^{(\underline i)}_n$ and $\mu\in \Pi(\sigma,\nu)$, $\mathcal V_0$ is an open cover with $Leb(\mathcal V_0)\leq \varepsilon$ and $L>0$ is specified by the homogeneity of $\gamma$ and does not depend on neither $\vep$ nor $n$. Notice that the inequality
$$\sum_{\underline i}\, b_\nu(g_{\omega^{(\underline i)}},n,\varepsilon,\delta) \,\geqslant \,b_\mu(T_G\mid_{Z^\N \times X},n,\varepsilon,\delta)$$
is a consequence of the fact that, if $\{x_1^{(i)},\dots,x_{b(g_{\omega^{(i)}},n,\varepsilon)}\}$ is a $(g_{\omega^{(i)}},n,\varepsilon)$--spanning set for  a subset   $\overline Z\subset Z$, satisfying $\nu(\overline Z)\geq 1-\delta$,  with smallest cardinality, then
$$\bigcup_{\underline i} \,\Big\{\Big(\omega^{(\underline i)},x_1^{(\underline i)}\Big),\dots,\Big(\omega^{(\underline i)},x_{b(g_{\omega^{(i)}},n,\varepsilon)}^{(\underline i)}\Big)\Big\}$$
is a $(T_G,n,\varepsilon)$--spanning set for $Y^\mathbb N\times \overline Z$ and $\mu(Y^\mathbb N\times \overline Z)=\nu(\overline Z)\geq 1-\delta$.
Besides, the inequality
$$\min_{\underline i}  \prod_{j=0}^{n+K-1} \gamma\Big(B(p_{i_j},\frac{\vep}{4})\Big) \geqslant  \left(\frac{1}{L^2}\right)^{n+K} \left(\frac1{N_Z(\vep)}\right)^{n+K}$$
is due to the homogeneity of $\gamma$, which implies that, for every $q \in \supp\, \nu$, any $p_{i_j}$ and all $\underline i$,
$$\gamma\Big(B(p_{i_j},\vep)\Big) \geqslant \frac{1}{L} \,\gamma\Big(B(q,\vep)\Big) \quad \quad \forall \, \vep >0$$
and the fact that, as $\bigcup_{e \,\in \,E} \,B(e,\frac{\vep}{4}) = Z$,
$$1 = \gamma\left(\bigcup_{e \,\in \,E} \,B(e,\frac{\vep}{4})\right) \leqslant \sum_{e \,\in \,E} \,\gamma\Big(B(e,\frac{\vep}{4})\Big) \,\leqslant \,N_Z(\vep)\, L\, \gamma\Big(B(q,\frac{\vep}{4})\Big)$$
thus
$$\gamma\Big(B(q,\frac{\vep}{4})\Big) \geqslant \frac{1}{L}\, \frac{1}{N_Z(\vep)}.$$
Then we notice that, by \eqref{ineq:N-S}
\begin{align*}
    \sup_{\{\nu\in\mathcal{M}:\Pi(\sigma,\nu)_{erg}\not=\emptyset\}}h_\nu^S(\mathbb S,\varepsilon,\mathbb P)
    &\geq \sup_{\mu\in \mathcal E(T_G)}h^S_\mu(T_G,\mathcal V_0)-\log N_Z(\varepsilon)\\
    &=h_{\text{top}}(T_G,\mathcal V_0)-\log N_Z(\varepsilon)\\
    &\geq h(T_G,3\varepsilon)-\log N_Z(\varepsilon).
\end{align*}
Therefore,
\begin{align}\label{mdim-geq}
\limsup_{\varepsilon\to0} \frac{h^S(\mathbb S,\varepsilon,\mathbb P)}{-\log\varepsilon}
    & \geqslant \overline{\text{mdim}}_M\,\Big(Z^{\mathbb N} \times X,T_G,D\times d\Big) - \limsup_{\varepsilon\,\to\,0^+} \,\frac{\log N_Z(\vep)}{-\log\varepsilon} \\ \nonumber
    & = \overline{\text{mdim}}_M\,\Big(Z^{\mathbb N} \times X,T_G,D\times d\Big) - \overline{\dim}_B Z \\ \nonumber
    & = \overline{\text{mdim}}_M\,\Big((\supp \,\nu)^{\mathbb N} \times X,T_G,D\times d\Big) - \overline{\dim}_B\,(\supp\,\nu)\\ \nonumber
    & = \overline{\text{mdim}}_M\,\Big(X,\mathbb S, d, \mathbb P\Big) .
\end{align}

For the converse inequality we observe that

\begin{align*}
    &\int_{Y^{\mathbb{N}}}N_\nu(\mathbb S,\mathcal U_0, g_{\omega_n}\dots g_{\omega_1},n,\delta)\ d\mathbb P(\omega)
    \leq \int_{Y^{\mathbb{N}}}s_\nu(\mathbb S,Leb(\mathcal U_0), g_{\omega_n}\dots g_{\omega_1},n,\delta)\ d\mathbb P\\ 
  \nonumber  &\leq \sum_{\underline i=(i_1\dots i_{n+k})}\left[\max_{\omega\in C_{\underline i}\cap Z^{\mathbb{N}}} s_\nu(\mathbb S,Leb(\mathcal U_0), g_{\omega_n}\dots g_{\omega_1},n,\delta)\times\mathbb P(C_{\underline i})\right].
\end{align*}
Now we notice that the image of $s_\nu(\mathbb S,\cdot,n, Leb(\mathcal U_0)):C_{\underline i}\to \mathbb Z_+ $ is contained in  $[0,s(T_G,n,Leb(\mathcal U_0))$. So, it has a maximum  in $\mathbb Z_+$ and such maximum  is attained by some $\omega^{(\underline i)}\in C_{\underline i}$. So, using the fact that
$$
N_\nu(\mathbb S,\mathcal U_0, g_{\omega_n}\dots g_{\omega_1},n,\delta)\leq s_\nu(\mathbb S, g_{\omega_n}\dots g_{\omega_1},n,Leb(\mathcal U_0),\delta)
$$
and 

$$
\sum_{\underline i=(i_1\dots i_{n+k})}s_\nu(\mathbb S, g_{\omega_n}\dots g_{\omega_1},n,Leb(\mathcal U_0),\delta) \leq s_\mu(T_G,n,Leb(\mathcal U_0))
$$
and that $\gamma $ is homegeneous  we obtain
\begin{align*}
    &\int_{Y^{\mathbb{N}}}N_\nu(\mathbb S,\mathcal U_0, g_{\omega_n}\dots g_{\omega_1},n,\delta)\ d\mathbb P(\omega)\\
    &\leq \int_{Y^{\mathbb{N}}}s_\nu(\mathbb S, g_{\omega_n}\dots g_{\omega_1},n,Leb(\mathcal U_0),\delta)\ d\mathbb P\\
  \nonumber  &\leq \sum_{\underline i=(i_1\dots i_{n+k})}\left[s_\nu(\mathbb S, g_{\omega_n}\dots g_{\omega_1},n,Leb(\mathcal U_0),\delta)\times\mathbb \displaystyle\max_{\omega\in C_{\underline i}\cap Z^{\mathbb{N}}} P(C_{\underline i})\right]\\\nonumber
  &\leq s_\mu(T_G,n,Leb(\mathcal U_0))\left(\frac{1}{N_Z(Leb(\mathcal U_0))}\right)^{n+K}\\ \nonumber
  &\leq N_\mu(T_G,\mathcal V_0,n)\left(\frac{1}{N_Z(Leb(\mathcal U_0))}\right)^{n+K},
\end{align*}
where $\mathcal V_0$ is a finite collection of open sets which covers $Y^\mathbb N\times X$ up to a set of $\mu$-measure less than $\delta$, $\displaystyle\frac{\varepsilon}{4}\leq Leb(\mathcal{U}_0)=\text{diam}(\mathcal{V}_0)\leq \varepsilon$ and $Leb(\mathcal V_0)\geq \displaystyle\frac{\varepsilon}{8}$. 
\begin{align*}
   h^S(\mathbb S,\varepsilon,\mathbb P)
   &=\sup_{\{\nu\in \mathcal M(X):\Pi(\sigma,\nu)_{erg}\not=\emptyset\}}\inf_{\text{diam}(\mathcal U)\leq \varepsilon} h_\nu^S(\mathbb S,\mathcal U,\mathbb P)\\
   &\leq\sup_{\{\nu\in \mathcal M(X):\Pi(\sigma,\nu)_{erg}\not=\emptyset\}}h_\nu^S(\mathbb S,\mathcal U_0,\mathbb P)\\
   &=\sup_{\{\nu\in \mathcal M(X):\Pi(\sigma,\nu)_{erg}\not=\emptyset\}}
    \limsup_{n\to\infty}\frac{1}{n}\log \int_{Y^{\mathbb{N}}}N_\nu(\mathbb S,\mathcal U_0, g_{\omega_n}\dots g_{\omega_1},n,\delta)\ d\mathbb P(\omega)\\
      &\leq\sup_{\{\nu\in \mathcal M(X):\Pi(\sigma,\nu)_{erg}\not=\emptyset\}}\sup_{\mu\in\Pi(\sigma,\nu)}
    \limsup_{n\to\infty}\frac{1}{n} \log N_\mu(T_G,\mathcal V_0,n,\delta)-\log N_Z(Leb(\mathcal U_0))\\
    &=h_{top}(T_G, \mathcal V_0)-\log N_Z\left(\frac{\varepsilon}{4}\right)\\
    &\leq  \limsup_{n\to\infty}\frac{1}{n} \log s(T_G,n,Leb(\mathcal V_0))-\log N_Z\left(\frac{\varepsilon}{4}\right)\\
    &\leq \limsup_{n\to\infty}\frac{1}{n} \log s\left(T_G,n,\frac{\varepsilon}{8}\right)-\log N_Z\left(\frac{\varepsilon}{4}\right).
\end{align*}
Hence,
\begin{align}\label{mdim-leq}
 \limsup_{\varepsilon\to0} \frac{h^S(\mathbb S,\varepsilon,\mathbb P)}{-\log\varepsilon}
    &\leq\limsup_{\varepsilon\to0}\left[\frac{h\left(T_G,\frac{\varepsilon}{8}\right)}{-\log\varepsilon}-\frac{\log N_Z\left(\frac{\varepsilon}{4}\right)}{-\log\varepsilon}\right]\\ \nonumber
    &=\displaystyle\overline{\text{mdim}}_M \,\Big(Y^\mathbb Y\times X,T_G, D\times d\Big)-\overline{\text{dim}}_B(Z)\\ \nonumber
    &=\displaystyle\overline{\text{mdim}}_M \,\Big(X,\mathbb S, d, \mathbb{P}\Big).
\end{align}
By \eqref{mdim-geq} and \eqref{mdim-leq} we obtain the result.

\subsection{Proof of Theorem \ref{thm8}}
let $\nu_v$ be the natural volume  measure on $X$ and assume that 
$\displaystyle\underline{\text{mdim}}_{\nu_v} \,(x, d)\geq s $, for all $x\in X$. Fix $\eta>0$ and let 
$$
X_k=\left\{x\in X: \frac{\limsup_{n\to\infty}-\frac{1}{n}\log\nu(B_n^G(x,\varepsilon))}{-\log\varepsilon}> (s-\delta\slash2) \text{ for all }\varepsilon\in(0,\frac{1}{k})\right\}.
$$
By hypotheses, $X=\bigcup_{k\in\mathbb N}X_k$. For $\varepsilon\in(0,\frac{1}{5\cdot k}]$ and $x\in X_k $, there exists $n(x)\in\mathbb N$ so that 
for any $N\geq n(x)$ we have
$$
\nu_v(B_n^G(x,\varepsilon))\geq e^{-(s+\delta)N\cdot -\log\varepsilon}.
$$
Since $X$ is a compact Riemannian manifold it has bounded geometry (see \cite{E} for more details on manifolds of bounded geometry). It implies that each function 
$f_m:X_k\to\mathbb R$ given by $f_m(x):=\nu_v(B_m^G(x,\varepsilon))$ is continuous and so
$$
N_0:=\sup\{n(x):x\in X_k\}<\infty.
$$
By Vitali Covering Lemma, for any $N\geq N_0 $ it is possible to choose from the cover $\mathcal B_N:=\{\overline{B_N^G(x,\varepsilon)}:x\in X_K\}$
of $\overline{X_k}$ a subset $F_N\subset X_k$ and a family  $\mathcal D_N:=\{\overline{B_N^G(x,\varepsilon)}:x\in F_N\}$ of disjoint balls for which we have
$$
X_k\subset\overline{X_k}\subset \bigcup_{x\in F_N}\overline{B_N^G(x,5\varepsilon)}\subset  \bigcup_{x\in F_N}B_N^G(x,6\varepsilon)
$$
and 
$$
\nu_v(B_N^G(x,\varepsilon))\geq e^{-(s+\delta)N\cdot -\log\varepsilon} \text{  for all }x\in F_N.
$$
So, as the family $\mathcal D_N$ is given by disjoint balls,
$$
\sharp(F_N)\cdot e^{-(s+\delta)N\cdot-\log\varepsilon}=\sum_{x\in F_N}e^{-(s+\delta)N\cdot-\log\varepsilon}\leq \sum_{x\in F_N}\nu_v(B_N^G(x,\varepsilon))\leq1.
$$
As 
$$
h_{GLW}(X_k,\mathbb S,6\varepsilon)\leq \limsup_{n\to\infty}\frac{1}{N}\log \sharp(F_N)
$$
we have
$$
\sup_{k\in\mathbb N}\displaystyle\overline{\text{mdim}}_M^{GLW}\,(X_k,\mathbb S, d)=\limsup_{\varepsilon\to0}\frac{h_{GLW}(X_k,\mathbb S,6\varepsilon)}{-\log\varepsilon}\leq s-\delta 
$$
since $X_{k}\subset X_{k+1}$ for all $k\in \mathbb N$ and $X=\bigcup_{k\in\mathbb N} X_k$ we have
$$\displaystyle\overline{\text{mdim}}_M^{GLW}\,(X,\mathbb S, d)\leq s-\delta.$$
As $\delta\geq0$ may be considered arbitrary small we have  
$$\displaystyle\overline{\text{mdim}}_M^{GLW}\,(X,\mathbb S, d)\leq s$$
and it finishes the proof.
\subsection{Proof of Theorem \ref{thm7}}
The following lemma is an important tool in the proof.

\begin{lemma}\label{lemma:lemma-homogeneity}
Let $\nu\in\mathcal M(X)$ be a G-homogeneous probability measure. Then
$$
\displaystyle\overline{\text{mdim}}_\nu\,(x, d) =\displaystyle\overline{\text{mdim}}_\nu\,(y, d), \text{ for all } x,y\in X.
$$
\end{lemma}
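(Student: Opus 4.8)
The plan is to prove that the local upper measure metric mean dimension $\overline{\text{mdim}}_\nu(x,d) = \limsup_{\varepsilon\to 0}\frac{h_\nu^G(x,\varepsilon)}{-\log\varepsilon}$ is independent of the point $x$ when $\nu$ is $G$-homogeneous. The central idea is that the $G$-homogeneity condition (c) directly compares the $\nu$-measures of dynamical balls centered at different points, uniformly in the depth $n$, and this comparison transfers to the relevant limit quantities. First I would fix two points $x,y\in X$ and an $\varepsilon>0$, and invoke condition (c): there exist $\delta(\varepsilon)>0$ and $c>0$ such that
\begin{equation}\label{eq:Ghom-use}
\nu(B_n^G(x,\delta(\varepsilon)))\leq c\cdot\nu(B_n^G(y,\varepsilon))
\end{equation}
for every $n\in\mathbb N$. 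Taking $-\frac1n\log$ of both sides and then $\limsup_{n\to\infty}$ (noting that $\frac1n\log c\to 0$), this yields
\begin{equation}\label{eq:hcompare}
h_\nu^G(y,\varepsilon)\leq h_\nu^G(x,\delta(\varepsilon)).
\end{equation}

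Next I would divide \eqref{eq:hcompare} by $-\log\varepsilon$ and take $\limsup_{\varepsilon\to 0}$. The delicate point is that the right-hand side carries $h_\nu^G(x,\delta(\varepsilon))$, whose argument is $\delta(\varepsilon)$ rather than $\varepsilon$; to turn this into $\overline{\text{mdim}}_\nu(x,d)$ I must relate $\log\varepsilon$ to $\log\delta(\varepsilon)$. Since condition (c) only gives $\delta(\varepsilon)>0$ without any rate, I would first observe that $\delta(\varepsilon)\to 0$ as $\varepsilon\to 0$ (which follows because $h_\nu^G(x,\cdot)$ is monotone and the inequality must be compatible with $\varepsilon\to 0$), and rewrite
$$\frac{h_\nu^G(y,\varepsilon)}{-\log\varepsilon}\leq\frac{h_\nu^G(x,\delta(\varepsilon))}{-\log\delta(\varepsilon)}\cdot\frac{-\log\delta(\varepsilon)}{-\log\varepsilon}.$$
The main obstacle, and the step requiring the most care, is controlling the correction factor $\frac{-\log\delta(\varepsilon)}{-\log\varepsilon}$: without a hypothesis like $\delta(\varepsilon)=O(\varepsilon)$ (which is precisely the \emph{strongly} $G$-homogeneous assumption, not assumed here) this ratio need not tend to $1$. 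I expect one must either argue that the lemma as needed for Theorem \ref{thm7} is applied under the strong homogeneity hypothesis where $\delta(\varepsilon)=O(\varepsilon)$ gives $\frac{-\log\delta(\varepsilon)}{-\log\varepsilon}\to 1$, or else exploit that $\delta$ can be chosen in a controlled way; under $\delta(\varepsilon)=O(\varepsilon)$ the factor is bounded and the $\limsup$ over the subsequence $\delta(\varepsilon)$ dominates.

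Granting control of that factor, passing to $\limsup_{\varepsilon\to 0}$ gives $\overline{\text{mdim}}_\nu(y,d)\leq\overline{\text{mdim}}_\nu(x,d)$. Finally, since $x$ and $y$ play symmetric roles in condition (c) (one simply swaps them and uses the constants $\delta,c$ for the reversed comparison), the reverse inequality $\overline{\text{mdim}}_\nu(x,d)\leq\overline{\text{mdim}}_\nu(y,d)$ follows by the identical argument, and therefore $\overline{\text{mdim}}_\nu(x,d)=\overline{\text{mdim}}_\nu(y,d)$ for all $x,y\in X$, completing the proof. The only genuinely nontrivial ingredient is the logarithmic-ratio control discussed above; everything else is a direct transfer of the measure comparison \eqref{eq:Ghom-use} through the defining $\limsup$ operations.
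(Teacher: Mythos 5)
Your proposal follows essentially the same route as the paper's proof: invoke condition (c) of $G$-homogeneity to compare $\nu(B_n^G(x,\delta(\varepsilon)))$ with $c\cdot\nu(B_n^G(y,\varepsilon))$, push the comparison through $-\frac1n\log(\cdot)$ and $\limsup_{n\to\infty}$ to relate $h_\nu^G(y,\varepsilon)$ and $h_\nu^G(x,\delta(\varepsilon))$, divide by $-\log\varepsilon$, take $\limsup_{\varepsilon\to0}$, and symmetrize in $x$ and $y$. Incidentally, your direction $h_\nu^G(y,\varepsilon)\le h_\nu^G(x,\delta(\varepsilon))$ is the correct one: since $\nu(A)\le c\,\nu(B)$ gives $-\log\nu(A)\ge-\log c-\log\nu(B)$, the displayed inequality in the paper's own proof has its sign reversed.

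The obstacle you isolate is genuine, and the paper does not resolve it: its proof divides the two sides of the $h$-inequality by the two different denominators $-\log\delta(\varepsilon)$ and $-\log\varepsilon$ and then identifies $\limsup_{\varepsilon\to0}\frac{h_\nu^G(x,\delta(\varepsilon))}{-\log\delta(\varepsilon)}$ with $\overline{\text{mdim}}_\nu(x,d)$ without comment. For the inequality $\overline{\text{mdim}}_\nu(y,d)\le\overline{\text{mdim}}_\nu(x,d)$ the subfamily issue actually works in your favour (a $\limsup$ along $\delta(\varepsilon)\to0$ is at most the full $\limsup$, and one may shrink $\delta(\varepsilon)$ so that it does tend to $0$), but one still needs $\limsup_{\varepsilon\to0}\frac{\log\delta(\varepsilon)}{\log\varepsilon}\le1$, i.e.\ $\delta(\varepsilon)\ge\varepsilon^{1+o(1)}$, which plain $G$-homogeneity does not supply: with $\delta(\varepsilon)=\varepsilon^2$ the argument only yields $\overline{\text{mdim}}_\nu(y,d)\le2\,\overline{\text{mdim}}_\nu(x,d)$. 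Your proposed repair, namely that the lemma is only ever invoked in Theorem~\ref{thm7}(a) under the strong $G$-homogeneity hypothesis, where $\delta(\varepsilon)$ is comparable to $\varepsilon$ and the logarithmic ratio tends to $1$, is exactly right and is consistent with how the paper treats the analogous factor $\frac{\log\delta(\varepsilon)}{\log\varepsilon}$ in Proposition~\ref{prop:mdim_GLW}. In short: same approach as the paper, with the one nontrivial step honestly flagged rather than silently skipped.
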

\begin{proof}
For $\varepsilon>0$, by $G$-homogeneity, there exists $\delta(\varepsilon)>0$ and $c>0$ so that 
$$
\nu(B_n^G(x,\delta(\varepsilon)))\leq c\cdot\nu(B_n^G(y,\varepsilon)) ,
$$
and it implies 
$$
h_{\nu}^G(x,\delta(\varepsilon))=\limsup_{n\to\infty}-\frac{1}{n}\log\nu(B_n^G(x,\delta(\varepsilon)))\leq\limsup_{n\to\infty}-\frac{1}{n}\log\nu(B_n^G(y,\varepsilon))=h_{\nu}^G(y,\varepsilon) ,
$$
and so
$$
\limsup_{\varepsilon\to0}\frac{ h_{\nu}^G(x,\delta(\varepsilon))}{-\log \delta(\varepsilon)} \leq\limsup_{\varepsilon\to0}\frac{ h_{\nu}^G(y,\varepsilon)}{-\log \varepsilon}, \text{ for all } x,y\in X,
$$
which gives
$\displaystyle\overline{\text{mdim}}_\nu\,(x, d)\leq \displaystyle\overline{\text{mdim}}_\nu\,(y, d)$.
By switching the roles of $x$ and $y$ in the previous computations one obtains the converse inequality and finishes the proof.
\end{proof}
As a consequence of Lemma \ref{lemma:lemma-homogeneity} we obtain that makes sense to define the measure metric mean dimension of a semigroup action with to respect of a $G$-homogeneous measure as the following:
\begin{align*}
    \displaystyle\overline{\text{mdim}}_\nu\,(\mathbb S, d)=\limsup_{\varepsilon\to0}\frac{ h_{\nu}^G(x,\varepsilon)}{-\log \varepsilon}, \text{ for any }x\in X
\end{align*}
since the limsup considered is constant in $X$.

\begin{proposition}\label{prop:mdim_GLW}
Let $G$ be a compactly generated semigroup  and $\nu$ be a strongly $G$-homogeneous probability measure on a  compact metric space $(X,d)$. Then 
$$
\displaystyle\overline{\text{mdim}}_\nu\,(X,\mathbb S, d)=\displaystyle\overline{\text{mdim}}_M^{GLW}\,(X,\mathbb S, d).
$$
\end{proposition}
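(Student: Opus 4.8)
The plan is to prove the two inequalities separately, in each case converting a packing or a covering estimate for $\mathbb{S}$ into an estimate on the $\nu$-measure of dynamical balls by means of strong $G$-homogeneity. By Lemma~\ref{lemma:lemma-homogeneity} the quantity $\overline{\text{mdim}}_\nu(X,\mathbb S,d)$ does not depend on the base point, so I would fix once and for all a reference point $x_0\in X$ and work throughout with $h_\nu^G(x_0,\cdot)$. Write $\delta(\cdot)$ and $c$ for the data furnished by homogeneity. The hypothesis that $\nu$ is \emph{strongly} $G$-homogeneous — i.e. $\delta(\varepsilon)$ comparable to $\varepsilon$, so that $\delta(\varepsilon)\asymp\varepsilon$ and $\delta^{-1}$ is available — will be used exactly to guarantee that $-\log\delta(\varepsilon)\sim -\log\varepsilon$ as $\varepsilon\to0^+$, so that a bounded distortion of the radius becomes invisible once we divide by $-\log\varepsilon$ and pass to the $\limsup$.

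For the inequality $\overline{\text{mdim}}_M^{GLW}\le\overline{\text{mdim}}_\nu$ I would argue by packing. Fix $n$ and $\varepsilon>0$ and take a maximal $(n,\varepsilon)$-separated set $E$ by elements of $\mathbb S$, so that $\#E=s(n,\varepsilon)$. If $x\ne y$ in $E$ had a common point $z\in B_n^G(x,\varepsilon/2)\cap B_n^G(y,\varepsilon/2)$, the triangle inequality would give $d(g(x),g(y))<\varepsilon$ for every $g\in G_j$, $0\le j\le n$, contradicting separation; hence $\{B_n^G(x,\varepsilon/2)\}_{x\in E}$ are pairwise disjoint and $\sum_{x\in E}\nu(B_n^G(x,\varepsilon/2))\le1$. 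Bounding each term below by $c^{-1}\nu(B_n^G(x_0,\delta(\varepsilon/2)))$ via homogeneity at radius $\varepsilon/2$ gives $s(n,\varepsilon)\le c\,\nu(B_n^G(x_0,\delta(\varepsilon/2)))^{-1}$; applying $\frac1n\log(\cdot)$ and $\limsup_{n\to\infty}$ (the single constant $c$ contributes an $O(1/n)$ term that dies) yields $h_{GLW}(\mathbb S,\varepsilon)\le h_\nu^G(x_0,\delta(\varepsilon/2))$. Dividing by $-\log\varepsilon$, using $-\log\delta(\varepsilon/2)\sim-\log\varepsilon$, and noting that $\delta(\varepsilon/2)$ sweeps out a subfamily of radii tending to $0$, the $\limsup_{\varepsilon\to0^+}$ delivers the desired bound.

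For the reverse inequality I would use the same maximal $E$, which is automatically $(n,\varepsilon)$-spanning: maximality forces the closed balls $\overline{B_n^G(x,\varepsilon)}$, and hence the open balls $B_n^G(x,2\varepsilon)$ for $x\in E$, to cover $X$. Thus $1\le\sum_{x\in E}\nu(B_n^G(x,2\varepsilon))\le s(n,\varepsilon)\,\sup_{x}\nu(B_n^G(x,2\varepsilon))$. Writing $2\varepsilon=\delta(\rho)$ with $\rho=\delta^{-1}(2\varepsilon)\asymp\varepsilon$ and transferring the supremum to $x_0$ by homogeneity, $\sup_x\nu(B_n^G(x,2\varepsilon))\le c\,\nu(B_n^G(x_0,\rho))$, so $\nu(B_n^G(x_0,\rho))\ge (c\,s(n,\varepsilon))^{-1}$ and therefore $h_\nu^G(x_0,\rho)\le h_{GLW}(\mathbb S,\varepsilon)$. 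Now I would reparametrise by $\rho$: for each small $\rho$ there is $\varepsilon=\varepsilon(\rho)\asymp\rho$ with this estimate, and since $-\log\varepsilon(\rho)\sim-\log\rho$, taking $\limsup_{\rho\to0^+}$ of $h_\nu^G(x_0,\rho)/(-\log\rho)$ gives $\overline{\text{mdim}}_\nu\le\overline{\text{mdim}}_M^{GLW}$. Combining the two inequalities yields the equality.

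The packing and covering inequalities and the disappearance of the single constant $c$ under $\limsup_n\frac1n$ are routine. The hard part — and the only place where \emph{strong} $G$-homogeneity, rather than plain $G$-homogeneity, is essential — is the control of the radius distortion: I must know that replacing $\varepsilon$ by $\delta(\varepsilon)$ or $\delta^{-1}(\varepsilon)$ alters $-\log\varepsilon$ by a factor tending to $1$, i.e. that $\delta(\varepsilon)\asymp\varepsilon$. Under mere homogeneity, where $\delta(\varepsilon)$ might decay faster than any fixed multiple of $\varepsilon$, the normalised quantities $h_{GLW}(\mathbb S,\varepsilon)/(-\log\varepsilon)$ and $h_\nu^G(x_0,\varepsilon)/(-\log\varepsilon)$ need not share the same $\limsup$, even though their $\varepsilon\to0$ limits coincide by the entropy identity of \cite{Bis}; it is exactly the metric-mean-dimension normalisation that forces the strong hypothesis into the argument.
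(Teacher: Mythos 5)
Your argument is correct and follows essentially the same route as the paper's: a packing bound with disjoint half-radius dynamical balls for $\overline{\text{mdim}}_M^{GLW}\le\overline{\text{mdim}}_\nu$, a covering bound with doubled balls for the reverse, with strong $G$-homogeneity used exactly as you identify, namely to transfer ball measures to a single base point and to guarantee $-\log\delta(\varepsilon)\sim-\log\varepsilon$ so the normalisation survives. The only cosmetic difference is that you reuse the maximal separated set as a spanning family and invert $\delta$, whereas the paper takes a minimal $(n,\delta(\varepsilon))$-spanning set directly; this changes nothing of substance.
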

\begin{proof}
Fix $\varepsilon>0$ and take $E$ a maximal $(n,\varepsilon)$-separated set in $X$. Then, by the maximality property of $E$,
$B_n^G(x,\varepsilon\slash2)\cap B_n^G(y,\varepsilon\slash2)$ for any $x,y\in E$. In particular, for a fixed $x\in E$
$$
\nu(X)\geq \sum_{y\in E}\nu\left(B_n^G(y,\varepsilon\slash2)\right)\geq s(n,\varepsilon)\cdot\nu(B_n^G(x,\varepsilon\slash2)).
$$
By the  $G$-homogeneity there exist $0<\delta(\varepsilon)<\varepsilon$ and $c>0$ so that 
$\nu(B_n^G(y,\delta(\varepsilon)))\leq c\cdot \nu(B_n^G(x,\varepsilon\slash2))$, for all $x,y\in X$.
It follows that 
$$
\limsup_{n\to\infty}\frac{1}{n}\log s(n,\varepsilon)\leq \limsup_{n\to\infty}-\frac{1}{n}\log\nu(B_n^G(y,\delta(\varepsilon))).
$$
Now, by the strongly $G$-homogeneity
\begin{align*}\label{eq:desi}
 \nonumber \displaystyle\overline{\text{mdim}}_M^{GLW}\,(X,\mathbb S, d)
  &= \limsup_{\varepsilon\to0} \frac{h_{GLW}(\mathbb S,\varepsilon)}{-\log\varepsilon}\\ \nonumber
  &\leq \limsup_{\varepsilon\to0}\frac{h^G_\nu(\delta(\varepsilon))}{-\log\delta(\varepsilon)}\frac{\log\delta(\varepsilon)}{\log\varepsilon}\\
   &=\displaystyle\overline{\text{mdim}}_\nu\,(X,\mathbb S, d).
\end{align*}
and then
$
\displaystyle\overline{\text{mdim}}_M^{GLW}\,(X,\mathbb S, d)\leq \displaystyle\overline{\text{mdim}}_\nu\,(\mathbb S, d)
$.

For the opposite inequality, fix $\delta>0$ and notice that that if $F$ is a $(n,\varepsilon)$-spanning set of minimal cardinality $b(n,\varepsilon)$, then $X\subset \bigcup_{x\in F}B_n^G(x,2\delta)$. Given $\varepsilon>0$ there exist $0<\delta(\varepsilon)<\varepsilon$ and $c>0$
for which
$$
\nu\left(B_n^G(x,2\delta(\varepsilon))\right)\leq c\cdot \nu\left(B_n^G(y,\varepsilon)\right) \text{ for all }x,y\in X \text{ and }n\in\mathbb N.
$$
It guarantees that
$$
c\cdot b(n,\delta(\varepsilon))\cdot\nu\left(B_n^G(y,\varepsilon)\right)\geq \nu(X)>0
$$
and so, by  the strong $G$-homogeneity, we have
\begin{align*}
 \displaystyle\overline{\text{mdim}}_M^{GLW}\,(X,\mathbb S, d)
 &=\limsup_{\varepsilon\to0} \frac{h_{GLW}(\mathbb S,\delta(\varepsilon))}{-\log\delta(\varepsilon)}\\
 &\geq  \limsup_{\varepsilon\to0}\frac{h^{GLW}_\nu(\delta(\varepsilon))}{-\log\varepsilon}\frac{\log\varepsilon}{\log\delta(\varepsilon)}\\
 &=\displaystyle\overline{\text{mdim}}_\nu\,(X,\mathbb S, d),
\end{align*}
and it ends the proof.
\end{proof}

Let us proceed to the proof of Theorem \ref{thm7}. For the first we notice that it is a consequence of Proposition \ref{prop:mdim_GLW}.
For part (b) let $\nu$ be a Borel measure on $X$ so that 
$\displaystyle\underline{\text{mdim}}_\nu \,(x, d)\geq s $, for all $x\in X$. Fix $\eta>0$ and let 
$$
X_k=\left\{x\in X: \frac{\limsup_{n\to\infty}-\frac{1}{n}\log\nu(B_n^G(x,\varepsilon))}{-\log\varepsilon}> (s-\delta\slash2) \text{ for all }\varepsilon\in(0,\frac{1}{k})\right\}.
$$
By hypotheses, $X=\bigcup_{k\in\mathbb N}X_k$. It follows that $0<\nu(X)\leq\sum_{k}\nu(X_k)$, which guarantees the existence of some $k_0\in\mathbb N$ for which we have
$\nu(X_{k_0})>0$. Again, we can wright $X_{k_0}=\bigcup_{N\in\mathbb N}X_{k_0,N}$ where
$$
X_{k_0,N}=\left\{x\in X_{k_0}: \frac{-\log\nu(B_n^G(x,\varepsilon))}{-n\log\varepsilon}> (s-\delta\slash2) \text{ for all }n\geq N\right\}.
$$
In such case, there exists $N_0\in\mathbb N$ for which $\nu(X_{k_0,N_0})>0$. In particular,
$$
\nu(B_n^G(x,\varepsilon))\leq e^{-n(s-\delta)\cdot( -\log\varepsilon)}, \text{ for all }x\in X_{k_0,N_0}, \varepsilon\in(0,\frac{1}{k}) \text{ and }n\geq N_0. 
$$
Now, for each integer $N\geq N_0$ consider the open cover of $X_{k_0,N_0}$ given by $\mathcal B_N=\{B_N^G(x,\varepsilon): x\in X_{k_0,N_0}\}$. 
In such case we have that for a subcover $\mathcal C$ of $\mathcal B_N$
$$
\inf_{\mathcal C}\cdot \sharp(\mathcal C)e^{-N(s-\delta)\cdot( -\log\varepsilon)}=\inf_{\mathcal C}\left\{\sum_{B_N^G(x,\varepsilon)\in\mathcal C}e^{-N(s-\delta)\cdot( -\log\varepsilon)}\right\}\geq \nu(X_{k_0,N_0}).
$$
As $\text{cov}(X,N,\varepsilon)\geq \text{cov}(X_{k_0,N_0},N,\varepsilon)$, for all $N$ and $\varepsilon>0$,
we have
$$
\text{cov}(X,N,\varepsilon)e^{-N(s-\delta)\cdot( -\log\varepsilon)}\geq\nu(X_{k_0,N_0}),
$$
and it implies that
\begin{align*}
    \limsup_{N\to\infty}\frac{1}{N}\log \text{cov}(X,N,\varepsilon)e^{-N(s-\delta)\cdot( -\log\varepsilon)}\geq 0 
\end{align*}
and so,
$$
h_{GLW}(X,\mathbb S,\varepsilon)\geq (s-\delta)\cdot( -\log\varepsilon).
$$
Hence
$$
\displaystyle\overline{\text{mdim}}_M^{GLW}\,(X,\mathbb S, d)=\limsup_{\varepsilon\to 0}\frac{ h_{GLW}(X,\mathbb S,\varepsilon)}{-\log\varepsilon}\geq s-\delta.
$$
As the inequality was obtained for an arbitrary $\delta$ we conclude that 
$$\displaystyle\overline{\text{mdim}}_M^{GLW}\,(X,\mathbb S, d)=\limsup_{\varepsilon\to 0}\frac{ h_{GLW}(X,\mathbb S,\varepsilon)}{-\log\varepsilon}\geq s,
$$
as part (b) states.

\end{document}